\definecolor{darkred}{rgb}{1,0,0} %can change the intensity in [0,1]
\definecolor{darkgreen}{rgb}{0,0.8,0}
\definecolor{darkblue}{rgb}{0,0,1}
\newcommand{\norm}[1]{\left\lVert#1\right\rVert}
 \def\bt{\begin{theorem}}
 	\def\el{\end{lemma}}
 \def\bl{\begin{lemma}}
 	\def\et{\end{theorem}}
 \def\bp{\begin{proposition}}
 	\def\ep{\end{proposition}}
 \def\bd{\begin{definition}}
 	\def\ed{\end{definition}}
 \def\br{\begin{remark}}
 	\def\er{\end{remark}}
 \def\R{{\mathbb R}}
 \def\C{\mathbb C}
 \def\Z{{\mathbb Z}}
 \def\N{{\mathbb N}}
 \def\P{{\mathbb P}}
\def\S{{\mathbb S}}
\def\S1{{\mathbb S^1}}
 \def\label#1{\label{#1}{\bf(#1)}~}
 \numberwithin{equation}{section}
 \theoremstyle{plain}
 \newtheorem*{theorem*}{Theorem}
 \newtheorem{theorem}{Theorem}[section]
 \newtheorem{corollary}[theorem]{Corollary}
 \newtheorem{lemma}[theorem]{Lemma}
 \newtheorem{proposition}[theorem]{Proposition}
 \theoremstyle{definition}
 \newtheorem{definition}[theorem]{Definition}
 \theoremstyle{remark}
 \newtheorem{remark}[theorem]{Remark}
 \DeclareMathOperator{\const}{const}
 \DeclareMathOperator{\ind}{ind}
  \DeclareMathOperator{\Aut}{Aut}
 \newtheoremstyle{named}{}{}{\itshape}{}{\bfseries}{.}{.5em}{\thmnote{#3 }#1}
\theoremstyle{named}
\newtheorem*{namedtheorem}{Theorem}
 \theoremstyle{plain} % just in case the style had changed
 \def\Re{\text{Re}}
\begin{document}
 	
 	\title{On the Ekeland-Hofer symplectic capacities of the real bidisc }  
\author{Luca Baracco}
\address{Dipartimento di Matematica Tullio Levi-Civita, Universit\`a di Padova, via Trieste 63, 35121 Padova, Italy}
\email{baracco@math.unipd.it}

\author{Martino Fassina}
\address{Department of Mathematics, University of Illinois, 1409 West Green
Street, Urbana, IL 61801, USA}
\email{fassina2@illinois.edu}

\author{Stefano Pinton}
\address{Dipartimento di Matematica Tullio Levi-Civita, Universit\`a di Padova, via Trieste 63, 35121 Padova, Italy}
\email{pinton@math.unipd.it}
\thanks{M.F. acknowledges support of NSF grant 13-61001.}
       % Enter your title between curly braces
 %\author[Khanh]{Tran Vu Khanh}
 	%	\address{Tran Vu Khanh}
 		%\address{School of Mathematics and Applied Statistics, University of Wollongong, NSW, Australia,  2522}
 %		\email{tkhanh@uow.edu.au}
 	
 	%	\thanks{Research was partially supported by the Australian Research Council
 		%	DE160100173. This work was done in part while the author was  a visiting member at the Vietnam Institute for Advanced Study in Mathematics %(VIASM). He would like to thank the institution for its hospitality and support.}

 \begin{abstract} 
In $\C^2$ with the standard symplectic structure we consider the bidisc $D^2\times D^2$ constructed as the product 
of two open real discs of radius $1$. We compute explicit values for the first, second and third Ekeland-Hofer symplectic capacity of $D^2\times D^2$. We discuss some applications to questions of symplectic rigidity.
\end{abstract}
\subjclass[2010]{Primary 	53D05.  
Secondary 53D35}
\keywords{Ekeland-Hofer symplectic capacities, Lagrangian bidisc.}
  	\maketitle
  	\maketitle

 %***********************************
 % Introduction 
 %***********************************
% \tableofcontents 	
 	\section{Introduction and Main Result}

The first striking result about a non-trivial obstruction to the existence of a symplectic embedding was obtained by Gromov \cite{G85}. 
He proved that one can symplectically embed a sphere into a cylinder 
only if the radius of the sphere is less than or equal to the radius of the cylinder. Since this celebrated non-squeezing theorem appeared, many similar results of symplectic rigidity have been obtained for a variety 
of domains. For instance, McDuff studied symplectic embeddings of even-dimensional open ellipsoids into one another \cite{McD09,McD11} 
(see also Hutchings \cite{H11,H111} and McDuff-Schlenk \cite{McDS12}), and Guth \cite{G08} gave asymptotic results on when a complex polydisc can be symplectically embedded into another one. 
A useful tool to tackle these questions is given by global symplectic invariants for symplectic manifolds called capacities. 

A symplectic capacity is a functor $c$ that assigns to every symplectic manifold $(M,\omega)$ of dimension $2n$ a non-negative (possibly infinite) number $c(M,\omega)$ that satisfies the following conditions: 
\begin{enumerate}
\item (Monotonicity) If there exists a symplectic embedding of $(M_1,\omega_1)$ into $(M_2,\omega_2)$, then 
$c(M_1,\omega_1)\leq c(M_2,\omega_2).$
\item (Conformality) If $\lambda>0$, then $c(M,\lambda\omega)=\lambda c(M,\omega)$.
\item (Local nontriviality) For the open unit ball $B\subset\R^{2n}$ we have $c(B,\omega_0)>0$.
\item (Nontriviality) For the open cylinder $Z=\{z\in\C^n\colon |z_1|<1\}$ we have $c(Z,\omega_0)<\infty$.
\end{enumerate} 
Here $\omega_0$ denotes the standard symplectic structure on $\R^{2n}$. The reader can consult any standard textbook in the subject such as \cite[Chapter 12]{McDS17} for an extensive treatment of this topic. 

The first non-trivial capacity arose in Gromov's proof of the non-squeezing theorem, and over the years many more such symplectic invariants have been constructed by Ekeland and Hofer \cite{EH89,EH90}, Hofer and Zender \cite{HZ90}, Viterbo and others \cite{FHV90,V92}, Gutt and Hutchings \cite{GH18}.
 
In this paper we consider the construction of Ekeland and Hofer (which is recalled in Section \ref{SecHofer}). For any subset of a symplectic vector space, they define an infinite sequence $c_n$ of symplectic capacities. These quantities are notoriously difficult to compute explicitely, and precise values appear in the literature only for very special classes of domains, such as ellipsoids and polydiscs \cite{EH90}. The main purpose of this work is to compute some of these capacities for the real bidisc $D^2\times D^2$, which we now introduce.

In the complex space $\C^2$ with coordinates $z_j=x_j+iy_j, j=1,2$ endowed with the standard symplectic structure $dx_1\wedge dy_1+ dx_2\wedge dy_2$, consider the real bidisc
 \begin{equation}\label{realbidisc}
 D^2\times D^2:=\big{\{}(z_1,z_2)\in \C^2 | \  x_1^2+x^2_2 <1, y^2_1+y^2_2<1 \big{\}}.
 \end{equation}
Our main result is the following (see Theorems \ref{c1}, \ref{c2bis} and \ref{cap3}).
 
\begin{namedtheorem}[Main]\label{nine} 
For the unit real bidisc $D^2\times D^2$ we have $$c_1(D^2\times D^2) =4, \,\,c_2(D^2\times D^2)=3\sqrt{3},\,\, c_3(D^2\times D^2)=8.$$
\end{namedtheorem}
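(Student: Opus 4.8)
The plan is to exploit the fact that $D^2\times D^2$ is a convex body, so that by the Ekeland--Hofer theory recalled in Section \ref{SecHofer} each capacity $c_k$ is realized as the action of a closed characteristic on the boundary, selected by an $S^1$-equivariant (Fadell--Rabinowitz) minimax. Since the boundary is only piecewise smooth (it carries the corner torus $\{x_1^2+x_2^2=1\}\cap\{y_1^2+y_2^2=1\}$), I would first reduce to the smooth setting by approximating $D^2\times D^2$ from inside and outside by smooth convex bodies and invoking monotonicity together with the stability of the capacities under such approximations.

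The heart of the computation is an explicit description of the closed characteristics. Writing $x=(x_1,x_2)$ and $y=(y_1,y_2)$, a short computation with the characteristic foliation shows that on the face $\{|x|=1\}$ the point $x$ is frozen while $y$ travels along the chord in direction $-x$, and on the face $\{|y|=1\}$ the roles are reversed; at the corner the trajectory turns from one face to the other. Thus every closed characteristic is a billiard-type trajectory alternating between the two discs, i.e.\ a pair of inscribed polygons. I would encode such an orbit by the angles $(\alpha_i,\beta_i)$ of its successive corner points, prove that the relative angle $\alpha_i-\beta_i$ is constant along the orbit, and deduce that the closed orbits are indexed by integers $(n,m)$ (with $n$ corners per disc and winding number $m$), corresponding to the inscribed star polygon $\{n/m\}$. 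A direct integration of the Liouville form then gives the clean formula
\begin{equation*}
A_{n,m}=2n\sin\!\Big(\frac{\pi m}{n}\Big),
\end{equation*}
the perimeter of that star polygon. The minimal value $A_{2,1}=4$ (the back-and-forth diameter orbit) yields $c_1=4$ as in Theorem \ref{c1}, since for convex bodies $c_1$ equals the least characteristic action.

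The two smallest \emph{distinct} action values are $A_{2,1}=4$ and $A_{3,1}=3\sqrt3$, and the next orbit that can produce a capacity has action $8=A_{4,2}$, the doubly covered diameter orbit. To prove $c_2=3\sqrt3$ (Theorem \ref{c2bis}) and $c_3=8$ (Theorem \ref{cap3}) I would compute the $S^1$-equivariant indices of the billiard orbits and their iterates and feed them into the minimax selection. The essential --- and most delicate --- point is that the intermediate orbits, namely the honestly inscribed $n$-gons $(n,1)$ for $n\ge 4$ whose actions $4\sqrt2,\dots$ increase to $2\pi$ and therefore all lie strictly between $3\sqrt3$ and $8$, must be shown to contribute \emph{no} new class to the equivariant minimax, so that their actions are skipped entirely by the sequence $(c_k)$. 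Equivalently, one must verify that the triangle and the doubly covered diameter orbit carry the required index while the square, pentagon, $\dots$ do not.

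The main obstacle is exactly this index bookkeeping. It is complicated by two features: the round symmetry $(z_1,z_2)\mapsto(\cos\phi\,z_1-\sin\phi\,z_2,\ \sin\phi\,z_1+\cos\phi\,z_2)$ of the bidisc forces every characteristic into an $S^1$-family, so the critical sets are degenerate and one must work with Morse--Bott/equivariant indices rather than isolated orbits; and the corner requires the index to be computed on the smoothed approximations and then shown to stabilize. A cleaner alternative, which I would pursue in parallel to cross-check the two harder values, is to bracket $c_2$ and $c_3$ between explicit symplectic embeddings: a lower bound from embedding a suitable ellipsoid into $D^2\times D^2$ and an upper bound from the minimax evaluated on the explicit orbit of the claimed action, squeezing out $3\sqrt3$ and $8$.
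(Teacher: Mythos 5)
Your setup (smoothing the corner, identifying the characteristics with circular billiard trajectories of action $2n\sin(\pi m/n)$, and getting $c_1=4$ from the Hofer--Zehnder characterization of $c_1$ as the minimal action for convex bodies) matches the paper. But for $c_2$ and $c_3$ the proposal stops exactly where the real work begins. You correctly identify that everything hinges on showing the square, pentagon, \dots orbits (actions $4\sqrt2,\dots$ accumulating at $2\pi$) are skipped by the minimax, and then you leave that as ``index bookkeeping'' to be done, with no indication of how the equivariant Morse--Bott indices of these degenerate $S^1$-families (on the smoothed domains, through the corner) would actually come out. That is not a proof outline with a routine step missing; it is the entire content of the two hard cases. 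Likewise, your ``cleaner alternative'' is underspecified on both sides: a lower bound requires knowing \emph{which} ellipsoid embeds into $D^2\times D^2$ (this is a nontrivial input, in the literature due to Ramos), and an upper bound cannot be obtained by ``evaluating the minimax on the explicit orbit'' --- the minimax is an infimum over invariant sets of pseudoindex $\geq k$, so one must exhibit such a test set on which the action functional is bounded above by the claimed value, which is precisely the construction you have not supplied.

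For comparison, the paper avoids your index computation entirely. For the upper bounds it builds explicit $S^1$-invariant sets $W$ of pseudoindex $2$ and $3$ out of low Fourier modes (for $c_2$ a genuinely nonlinear set carrying a bump term $\gamma([\alpha:\beta])\bigl(\overline{\alpha}\tfrac{\alpha^3}{|\alpha|^3},\overline{\beta}\tfrac{\alpha^3}{|\alpha|^3}\bigr)e^{4\pi i t}$ supported near $[1:\pm i]$), and proves $\mathcal{A}(\zeta)-c\int_0^1 r(\zeta)\,dt\le 0$ on $W$ by estimating single Fourier coefficients of $\Re(\zeta_1^2+\zeta_2^2)$; this yields $c_2<4\sqrt2$ and $c_3<4\pi(\sqrt{109}-7)/5\approx 8.77$, so the square and all higher polygons are excluded by the numerical bound rather than by an index argument. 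For the lower bounds, $c_2>4$ follows because the action-$4$ characteristics form a set of index $1$, and $c_3>2\pi$ follows from a comparison with the inscribed ball (Lemma \ref{c31} and Proposition \ref{c32}: equality of capacities of nested convex domains forces a common closed characteristic on the boundary intersection, which here does not exist). That ball comparison is the device that disposes of the whole family of polygon orbits accumulating at $2\pi$ in one stroke. To complete your proposal you would need either to carry out the equivariant index computation you defer, or to reproduce substitutes for these two steps.
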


 The fact that $c_1(D^2\times D^2) =4$ is known \cite{AO14}. The referee pointed out to us that the values of $c_2(D^2\times D^2)$ and $c_3(D^2\times D^2)$ can also be obtained from the work of Ramos \cite{R17}, which uses very different techniques than the ones developed in this paper (see Remark \ref{rmkRamos} and Remark \ref{R2}).

The Ekeland-Hofer capacities are closely related to the existence of closed Hamiltonian orbits. The key feature of our computation is the use of Hamiltonians modeled on the gauge function of the domain, rather than Hamiltonians that are quadratic at infinity, as in the original definition of Ekeland and Hofer. This same idea already appears in \cite{BLMR85} in the context of smooth domains, and in \cite{F92} for the case of Lipschitz domains. The different choice of Hamiltonians does not affect the computation of the capacities, provided that no periodic orbits of period 1 are introduced at infinity. This feature is easily achieved by rescaling, since the closed periodic orbits of the Hamiltonians in question can be computed explicitely. The advantage in modeling the Hamiltonians on the gauge function is that we can then exploit the symmetries of the domain to simplify the estimates involved in the computations of the capacities.

We remark that our strategy can be easily adapted to compute the Ekeland-Hofer capacities of other domains. For instance, for the product of two real spheres in $\C^3$ one just needs to repeat the same computations while taking an additional coordinate into account.
 
We now point out how the results of this paper are related to recent work of Gutt and Hutchings. In \cite{GH18}, they use positive $S^1$-equivariant symplectic homology to introduce a sequence $c_k$ of symplectic capacities for star-shaped domains in $\R^{2n}$. Their capacities $c_k$ share many properties with the Ekeland-Hofer capacities, and in particular the remarkable ``product property" (see Section \ref{SectionMain}). Combining the results of \cite{GH18} with Ramos' insight that the real bidisc is a toric domain \cite{R17}, one can see that the Gutt-Hutchings capacities of $D^2\times D^2$ are the same as the Ekeland-Hofer capacities that we obtain in our Main Theorem. Our computations therefore support the conjecture made by Gutt and Hutchings \cite{GH18} that their capacities $c_k$ are always equal to the Ekeland-Hofer capacities. 
 
The organization of this paper is as follows. In Section \ref{1} we recall some background material and set the notation. In Section \ref{last} we show how one can approximate the bidisc $D^2\times D^2$ with a sequence of smooth convex domains. The main result is then proved in Section \ref{SectionCap}, while in Section \ref{SectionMain} we present some applications to questions of symplectic embedding.
It is the authors' hope that further applications to symplectic rigidity of these explicit values of the capacities will be found in the future.

 	\section{Background}\label{1}
 	\subsection{Basic definitions and notations}\label{sub1}
 	
Let $\C^n$ denote the standard complex vector space of dimension $n$ with variables $z_j=x_j+iy_j$. We endow $\C^n$ with the Euclidean scalar product $$\langle z,w\rangle := \Re \left(\sum^n_{j=1}z_j\bar{w_j}\right)$$ 
and the standard symplectic form $$\omega:=\frac 1{2i}\sum^n_{j=1}dz_j\wedge d\bar{z_j}=\sum^n_{j=1}dx_j\wedge dy_j.$$ All symplectic embeddings considered in this paper will be with respect to the standard symplectic form.

Let $\Omega$ be a bounded subset of $\C^n$ with smooth boundary $\partial\Omega$. We denote by $T\C^n$ the (real) tangent bundle of $\C^n$ and by $T\partial\Omega$ the tangent bundle of $\partial\Omega$. We write $Y\in T\C^n$ to mean that $Y$ is a local section of $T\C^n$. Consider a smooth defining equation $\rho$ of $\partial\Omega$ such that $|d\rho (z)|\neq 0$ for $z\in \partial\Omega$.
The {\em characteristic vector field} of $\partial\Omega$ is the unique vector field $X$ such that
\begin{equation*}
\omega(X,Y)=Y(\rho) \text{ for all } Y\in T\C^n.
\end{equation*}  
The characteristic vector field $X$ is tangent to $\partial\Omega$ and its restriction to $\partial\Omega$ does not depend on the choice of $\rho$. Moreover, $X$ generates the kernel of the restricted form $\omega_{|T\partial\Omega}$.

Let $J$ denote the standard complex structure on $\C^n$. For all $z\in\partial\Omega$ we have $X(z)=J\nabla\rho(z)$, where $\nabla \rho$ is the Euclidean gradient of $\rho$. Hence an integral curve of $X$ on $\partial \Omega$ is the solution of a system
\begin{equation}
\label{sistema}
\begin{cases} 
\dot{z} =J\nabla\rho(z)\\
z(0)=z_0
\end{cases}
\end{equation} 
where $z_0\in\partial\Omega$. These integral curves are called the {\em characteristics} of $\partial\Omega$. 

Of particular interest in symplectic geometry is the study of closed characteristics, that is, the solutions to \eqref{sistema} for which there exists a time $t_0>0$ such that $z(t_0)=z_0$. Let $T$ be the smallest such $t_0$. The image $\{z(t), \, 0\leq t\leq T\}$ is called an {\em orbit} and $T$ the {\em period} of the orbit.

If $\gamma:[0,T]\rightarrow \partial\Omega$ is a closed characteristic, the {\em action} of $\gamma$ is defined to be
$$\mathcal{A}(\gamma):=-\frac 12 \int^T_0 \langle J\dot{\gamma},\gamma\rangle\, dt.$$
If $\Omega$ is a bounded convex subset of $\C^n$ and $\{\gamma_i\}_{i\in I}$ is the set of closed characteristics of $\partial\Omega$, we can define the {\em action spectrum} of $\Omega$ as the set
$$\Sigma(\Omega):=\big{\{} |k\mathcal{A}(\gamma_i)|, k\in\mathbb{N}, i\in I\big{\}}.$$
Following Ekeland and Hofer \cite{EH90}, we will see how it is possible to choose some elements of $\Sigma(\Omega)$ called capacities that are symplectic invariants (see Section \ref{SecHofer}). We now describe how to adapt the concepts introduced above to non-smooth domains. 

Let $\Omega\subset \C^n$ be a convex bounded domain and $p\in \partial \Omega$. We say that a unit vector $n(p)$ is {\em normal} at $p$ for $\partial \Omega$ if 
\begin{equation} \label{normal} \langle n(p),x-p\rangle \le 0 \quad \forall x\in \Omega .\end{equation}
If $p$ is a smooth boundary point, then $n(p)$ is the usual exterior normal vector. If $\partial \Omega$ is not smooth at $p$, then there could be more than one choice for a normal vector. In this case, we let $n(p)$ denote the set of all vectors satisfying \eqref{normal}.

\bd Let $\Omega$ be a convex domain in $\C^n$. We say that $z\colon\R\rightarrow \Omega$ is a {\em characteristic} if $z(t)$ has right and left derivative $\dot{z}^{\pm}(t)$ for all $t$, and
\begin{equation*}
\dot{z}^{\pm}(t) \in Jn(z(t)). 
\end{equation*}
\ed Note that, for $\Omega$ smooth, this definition coincides with the definition of a characteristic given before.

\subsection{The action spectrum of the real bidisc}

We now turn our attention to the domain that is the main object of interest of this paper: the real bidisc $D^2\times D^2$. Recall that $D^2\times D^2$ was defined in \eqref{realbidisc} as the product of two open real discs of radius $1$. The next proposition, which follows from the work of Artstein-Avidan and Ostrover \cite{AO14}, describes the closed characteristics of $D^2\times D^2$ and its action spectrum $\Sigma(D^2\times D^2)$. 
\bp\label{s1} The unitary real bidics $D^2\times D^2$ has infinitely many closed characteristics. The action spectrum is given by the following set
\begin{equation} \label{s3} \Sigma(D^2\times D^2)=\big{\{} 2n\cos(\theta_{k,n})\,\vert\ k,n\in \N,\theta_{k,n}\in J_n\big{\}}\cup\big{\{}2n\pi\,\vert \ n\in\N \big{\}}, \end{equation}
where $J_n=\big{\{}\frac{k\pi}{n}, 0\le k\le \frac n2 -1\big{\}}$ if $n$ is even and $J_n=\big{\{}\frac {(2k-1)\pi}{2n}, 1\le k\le \frac{n-1}2 \big{\}}$ if n is odd.
\ep

\br The elements of $\Sigma(D^2\times D^2)$ are precisely the lengths of all closed billiard orbits in a circle of radius 1. 
\er
\br\label{rmk}Note that $\min\Sigma(D^2\times D^2)=4$. Moreover, the second smallest element in $\Sigma(D^2\times D^2)$ is $3\sqrt{3}$.
\er
\subsection{Ekeland-Hofer symplectic capacities}\label{SecHofer}
We recall, following \cite{EH90}, the definition of these symplectic capacities. We first set up the functional analytical framework. For more details see also \cite[Section II]{EH89}. 

Let $E$ be the Hilbert space of all functions $f\in L^2(\mathbb{R}/\mathbb{Z},\mathbb{C}^n)$ such that the Fourier series
$$f(t)=\sum_{k\in \Z} f_ke^{2k\pi i t},\quad f_k\in\mathbb{C}^n$$
satisfies
$$\sum_{k\in\Z}|k||f_k|^2 <\infty.$$
The inner product in $E$ is defined by
$$(f,g):=\langle f_0,g_0\rangle + 2\pi\sum_{k\in\mathbb{Z}}|k|\langle f_k,g_k\rangle.$$
%nuovo
$E$ is the most natural space on which the action functional $\mathcal{A}$ can be defined. It is easy to see that 
$$\mathcal{A}(f)=\pi \sum^{+\infty}_{k=0} k(|f_k|^2-|f_{-k}|^2).$$ 
Note that there is a natural action $T\colon \mathbb{S}^1\rightarrow \Aut(E)$ of $\mathbb{S}^1\simeq \R/\Z$ on $E$ given by the phase shift
$$ T_{e^{2\pi i\theta}} f(t):=f(t+\theta).$$
%We denote by $\norm{\cdot}$ the corresponding norm.
The space $E$ has a natural orthogonal splitting, compatible with the phase shift action, given by $$E=E^{-}\oplus E^0 \oplus E^{+}.$$ Here we have defined
\begin{equation*}
\begin{split}
&E^{-}=\{f\in E\,|\,f_k=0\text{ for } k\geq 0 \}\\
&\,E^{0}=\{f\in E\,|\,f_k=0\text{ for } k\neq 0 \}=\C^n \\
&E^{+}=\{f\in E\,|\,f_k=0\text{ for } k\leq 0 \}.\\
\end{split}
\end{equation*}
We denote by $P^+, P^0$ and $P^-$ the corresponding orthogonal projections. 

We now need to introduce the notion of the index of a subspace. Let $X$ be a Hilbert space over $\C$, and let $T\colon S^1\rightarrow \Aut(X)$ be a representation of $S^1$ on the vector space $$\Aut(X):=\{f\colon X\rightarrow X \,|\text{ $f$ is a linear isometry}\}.$$ A subset $A\subseteq X$ is called {\em invariant} if $T(\theta)(A)=A$ for all $\theta\in S^1$. 
Let $Y$ be another Hilbert space, and $R\colon S^1\rightarrow \Aut(Y)$ a representation of $S^1$ on $\Aut(Y)$. A linear map $f\colon X\rightarrow Y$ is called {\em invariant} if $f\circ T(\theta)=R(\theta)\circ f$ for all $\theta\in S^1$. Let $A\subseteq X$ be an invariant subset. For every $k\in\mathbb{N}$, we let $\mathcal{F}(A,k)$ denote the collection of functions $f\colon A\rightarrow \C^k\setminus\{0\}$ such that 
\begin{itemize}
\item $f$ is continuous.
\item There exists a positive integer $n$ such that $f(T(\theta)(x))=e^{2\pi n i\theta}f(x)$ for all $\theta\in S^1$ and for all $x\in A$.
\end{itemize}
We define the {\em index} of $A$ as the quantity 
\begin{equation}\label{index}
\alpha(A):=\min\{k\in\mathbb{N}\,|\, \mathcal{F}(A,k)\neq\varnothing\}.
\end{equation}
If $\mathcal{F}(A,k)=\varnothing$ for every $k\in\mathbb{N}$, we set $\alpha(A)=+\infty$. Moreover, we set $\alpha(\varnothing)=0$. 
Observe that if $F$ is the set of fixed points of $X$ for $T$, that is, $$F:=\{x\in X\,|\, T(\theta)(x)=x\,\text{ for all }\theta\in S^1\},$$ then $A\cap F\neq\varnothing$ implies $\alpha(A)=\infty$. %This observation tells us that the index is not 
%enough for our purposes \textcolor{red}{Magari dire qualcosa di piu qui} and motivates 

In the following paragraph we describe a pseudoindex theory in the sense of Benci relative to $E^+$. In our exposition we mainly follow \cite{B82} (see also \cite{EH90}). 

%First note that there is a natural representation $T$ of $S^1$ on $\Aut(E)$, defined by 
%$$T(\theta)(x)(t)=x(t+\theta)\quad \text{for $x\in E$, $\theta\in S^1$}.$$
Consider the group of homeomorphisms
\begin{equation}\label{groupGamma}
\Gamma:=\big{\{}h\colon E\rightarrow E\,|\, h=e^{\gamma^+}P^{+}+e^{\gamma^-}P^{-}+P^0+K\big{\}}
\end{equation}
such that the following conditions are satisfied:
\begin{itemize}
\item $K$ is a compact equivariant operator. 
\item $\gamma^{+}, \gamma^{-}\colon E\rightarrow \mathbb{R}^{+}$ map bounded sets in precompact sets and are invariant.
\item There exists a constant $c>0$ such that $\mathcal{A}(x)\leq 0$ or $\norm{x}>c$ implies that $\gamma^+(x)=\gamma^{-}(x)=0$ and $K(x)=0$. 
%\item $h|_{E^{0}\oplus E^{-}}=\id_{E^{0}\oplus E^{-}}$. 
\end{itemize}

Let $S:=\{x\in E\, |\, \norm{x}_{E}=1\}$ and let $\xi\subseteq E$ be an invariant subset of $E$. We define the {\em pseudoindex} of $\xi$ as 
$$\ind(\xi):=\inf\big{\{}\alpha(h(\xi)\cap S\cap E^{+})\,\vert\, h\in\Gamma\big{\}}.$$
For the basic properties of the pseudoindex we refer to \cite{EH90}. In particular, the following result is often useful.
\begin{proposition}\cite[Proposition 1]{EH90}
If $V_{k}\subseteq E^{+}$ is a finite dimensional invariant subspace of $E^{+}$ of complex dimension $k$, then $\ind(V_k\oplus E^0\oplus E^-)=k$.
\end{proposition}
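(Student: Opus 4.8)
The statement is an equality, so the plan is to prove the two inequalities $\ind(\xi)\le k$ and $\ind(\xi)\ge k$ separately, where I write $\xi:=V_k\oplus E^0\oplus E^-$ and $S^+:=S\cap E^+$. For the upper bound I would test the infimum defining $\ind(\xi)$ against the identity map, which belongs to $\Gamma$ (take $\gamma^+=\gamma^-=0$ and $K=0$). Since the splitting $E=E^-\oplus E^0\oplus E^+$ is orthogonal and $V_k\subseteq E^+$, one checks directly that $\xi\cap E^+=V_k$, so $\id(\xi)\cap S\cap E^+$ equals the unit sphere $S_{V_k}$ of $V_k$, and it suffices to produce a single admissible map witnessing $\alpha(S_{V_k})\le k$. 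Because $V_k$ is invariant it splits as a finite orthogonal sum $V_k=\bigoplus_j W_j$ of weight spaces, where $W_j$ lies in the $j$-th Fourier mode and the phase shift acts on it by $e^{2\pi i j\theta}$; let $N$ be a common multiple of the finitely many weights $j$ that occur. Writing a point of $V_k$ in coordinates $(z_1,\dots,z_k)$ adapted to this splitting, with $z_\ell$ of weight $j(\ell)$, the map $f(z_1,\dots,z_k):=(z_1^{N/j(1)},\dots,z_k^{N/j(k)})$ is continuous, never vanishes on $S_{V_k}$, and satisfies $f(T(\theta)x)=e^{2\pi i N\theta}f(x)$, so $f\in\mathcal F(S_{V_k},k)$ and $\alpha(S_{V_k})\le k$. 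This yields $\ind(\xi)\le k$.

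The lower bound is the crux, and is where essentially all the difficulty lies: I must show that \emph{every} $h\in\Gamma$ satisfies $\alpha(h(\xi)\cap S^+)\ge k$, i.e.\ that no admissible deformation can push the index of the intersection below $k$. Writing $h=e^{\gamma^+}P^++e^{\gamma^-}P^-+P^0+K$ and using $P^+x=P_{V_k}x$ for $x\in\xi$ together with the identity $\mathcal A(x)=\tfrac12\bigl(\|P^+x\|^2-\|P^-x\|^2\bigr)$, I would recast the condition $h(x)\in S^+$ as the system $P^0h(x)=0$, $P^-h(x)=0$, $\|P^+h(x)\|=1$. Since $K$ is compact and $\gamma^\pm$ send bounded sets to precompact ones, the pair $x\mapsto(P^0h(x),P^-h(x))$ is a compact $S^1$-equivariant perturbation of $(x^0,x^-)\mapsto(x^0,e^{\gamma^-}x^-)$, which is homotopic through positive scalings to the identity of $E^0\oplus E^-$; moreover $h$ is the identity wherever $\mathcal A(x)\le0$ or $\|x\|>c$. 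The plan is then to truncate to a finite-dimensional Galerkin subspace and to apply $S^1$-equivariant degree theory (a Borsuk--Ulam-type theorem for the phase-shift action) to conclude that the solution set of this system, intersected with the normalizing sphere in the $V_k$ direction, is non-empty and carries index at least $\dim_{\C}V_k=k$; one then passes to the limit using the monotonicity and subadditivity of $\alpha$.

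The main obstacle is precisely this last step: controlling the indefinite, infinite-dimensional intersection. I expect the delicate points to be verifying that the compactness of $K$ and the controlled growth of $\gamma^\pm$ make the Galerkin truncation legitimate, so that no index escapes to infinity, and checking that the equivariant Borsuk--Ulam input is sharp enough to detect the full complex dimension $k$ rather than mere non-emptiness. The normalization $\alpha(S_{V_k})=k$ for the standard sphere, together with the continuity and monotonicity of the index relative to $E^+$, is what upgrades degree-theoretic nonvanishing into the sharp lower bound; assembling these ingredients is the real work, whereas the upper bound and the bookkeeping of Fourier weights are routine.
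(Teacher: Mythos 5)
The paper itself offers no proof of this statement: it is quoted verbatim from \cite{EH90} (Proposition 1 there), so the relevant benchmark is the Ekeland--Hofer/Benci argument, a version of which the authors do reproduce when they compute $\ind(W)=2$ in the proof of Theorem \ref{c2bis}. Measured against that, your upper bound is complete and correct: the identity belongs to $\Gamma$, orthogonality of the splitting gives $(V_k\oplus E^0\oplus E^-)\cap S\cap E^+=S_{V_k}$, and the monomial map built from a common multiple of the (positive) Fourier weights of $V_k$ exhibits $\alpha(S_{V_k})\le k$, hence $\ind(V_k\oplus E^0\oplus E^-)\le k$.

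The genuine gap is the lower bound, which you yourself flag as the crux and then leave as a program rather than a proof. Concretely, what is missing is the actual execution of the three steps that turn your outline into an argument: (i) for an arbitrary $h\in\Gamma$, produce a neighborhood $U$ of $F:=h(\xi)\cap S\cap E^+$ with $\alpha(U)=\alpha(F)$ (continuity of the index); (ii) show that the Galerkin truncations $Q_n\bigl(h(\xi\cap E_n)\bigr)\cap S\cap E^+$ eventually lie in $U$ --- this is exactly where the compactness of $K$, the precompactness of the images of $\gamma^{\pm}$, and the normalization of $h$ on $\{\mathcal A\le 0\}\cup\{\norm{x}>c\}$ must be used, via the convergence argument the authors carry out for their set $W_M$; and (iii) apply the $S^1$-equivariant intersection theorem of \cite{FHR82} (Proposition 3.3 there) to the finite-dimensional equivariant parametrization of $Q_n(h(\xi\cap E_n))$ to obtain $\alpha\ge k$, then transfer the bound back by monotonicity of $\alpha$. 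Steps (ii) and (iii) are not routine bookkeeping: without (ii) the index could in principle drop in the limit, and the ``degree-theoretic nonvanishing'' you invoke in (iii) gives by itself only non-emptiness of the intersection, not the dimension count $k$; it is precisely the Borsuk--Ulam-type statement of \cite{FHR82}, applied to a map whose ``free part'' has the right codimension, that yields the sharp bound. So the strategy is the correct one, but as written the lower bound --- and hence the equality --- is not proved.
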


We need to introduce one last concept before defining the symplectic capacities. We call a smooth function $H\colon\C^n \rightarrow (0,+\infty)$ an {\em admissible Hamiltonian} for a bounded domain $\Omega\subset \C^n$ if \begin{itemize}
\item $H$ is $0$ on some open neighborhood of $\overline{\Omega}$. 
\item $H(z)=c|z|^2 $ for $|z|$ large enough, where $c>\pi$ and $c\notin \Z\pi$.
\end{itemize}
We denote by $\mathcal{H}(\Omega)$ the set of the admissible Hamiltonians for $\Omega$. 
For $j$ a positive integer and $H\in\mathcal{H}(\Omega)$, we define a number $c_{H,j}\in (0,+\infty)\cup\{\infty\}$ by
$$c_{H,j}:=\inf\big{\{}\sup \mathcal{A}_{H}(\xi)\,|\, \xi\subset E \text{ is $S^1$-invariant and }\ind(\xi)\geq j\big{\}}.$$
Here $\mathcal{A}_H\colon E\rightarrow\mathbb{R}$ is the action functional associated to a Hamiltonian $H$ defined by
$$\mathcal{A}_H(f):=\mathcal{A}(f)-\int_0^1H(f(t))\, dt.$$
Every number $c_{H,j}$ is non-negative and, if finite, is a critical value of $\mathcal{A}_H$ \cite[page 559]{EH90}. 

We can now define the $j$-th {\em Ekeland-Hofer symplectic capacity} of $\Omega$ as 
$$c_j(\Omega):=\inf_{H\in\mathcal{H}(\Omega)}c_{H,j}.$$

\begin{remark}\label{nuovoremmark} In our computation of the Ekeland-Hofer symplectic capacities, we will not use Hamiltonians with quadratic behavior at infinity, as in the definition. We will use instead Hamiltonians of the form $H(z)=f(r(z))$, where $r$ is the gauge function of the domain, and $f$ is linear at infinity (see \cite{BLMR85,F92}). This choice does not affect the values obtained in the computation of the capacities, as one can easily prove by combining the following two facts: 
\begin{itemize}
\item For Hamiltonians $H_1$ and $H_2$ we have  $$H_1\le H_2 \Longrightarrow \mathcal{A}_{H_1}\ge \mathcal{A}_{H_2}\Longrightarrow c_{H_1,k}\le c_{H_2,k}.$$
\item For every $f$ linear at infinity there exists $H\in \mathcal{H}(\Omega)$ such that $f(r)\le H$. Similarly, for any $H\in \mathcal{H}(\Omega)$ there exists $f$ such that $H\le f(r)$.  
\end{itemize} Observe also that if we choose $f$ so that the Hamiltonian $f(r)$ has no periodic solutions of period $1$ at infinity, then $\mathcal{A}_{f(r)}$ satisfies the Palais-Smale condition. This implies that $c_{f(r),k}$, if it is finite, is a critical value of $\mathcal{A}_{f(r)}$ (see \cite[page 71]{AB94}).
\end{remark} %fine nuovo

The next theorem from \cite{HZ90} characterizes the first symplectic capacity as the infimum of the action spectrum. 
\bt \label{c1smooth} Let $\Omega$ be a smoothly bounded convex domain in $\C^n$ and let $\alpha = \min \Sigma(\Omega)$. Then $c_1(\Omega)=\alpha$.
\et

%\begin{remark}From the proof of Theorem \ref{c1smooth} one can see see that, in general, if $W\subset E$ has index greater than $k$ and there exists $c>0$ such that 
%$$ \Psi_c (\zeta):=\mathcal{A}(\zeta)-c\int^1_0 r(\zeta(t))dt <0 \quad \forall \zeta\in W,$$
%then $c_k(\Omega)\le c$.
%\end{remark
Note that we cannot apply Theorem \ref{c1smooth} directly to the real bidisc, since its boundary $\partial (D^2\times D^2)$ is not smooth. In the next section we show how to overcome this difficulty by appropriately approximating $D^2\times D^2$ with smooth domains. 
\section{Approximation with smooth domains}\label{last}
We start by constructing a decreasing sequence of smooth convex domains $\mathcal{D}_n$ converging to $D^2\times D^2$. 
Let $g:\R\rightarrow [0,+\infty)$ be a convex, increasing function such that $g(1)=1$ and $g(s)=0$ for $s<0$. Consider the following subsets of $\C^2$:
\begin{equation}\label{4.1}
\mathcal{D}_n :=\{ z\in\C^2 |\ g(n(x^2_1+x^2_2 -1))+g(n(y^2_1+y^2_2 -1))\le 1\} .
\end{equation}
The domains defined in \eqref{4.1} are smooth and convex. Moreover, they satisfy the following properties: 
\begin{itemize}
\item $\mathcal{D}_n \supset \mathcal{D}_{n+1}$ for all $n\in\mathbb{N}$. 
\item $\bigcap^{+\infty}_{n=1} \mathcal{D}_n=D^2\times D^2$.
\item For all $n\in\mathbb{N}$, we have $$\frac 1 {\sqrt{1+\frac 1n}}\mathcal{D}_n \subset D^2\times D^2.$$
\end{itemize}
In particular, by the properties of the capacities, for any choice of positive integers $k$ and $n$, the following double inequality holds:
$$ \frac 1 {1+\frac 1n} c_k(\mathcal{D}_n)\le c_k(D^2\times D^2)\le c_k(\mathcal{D}_n) .$$  

The next proposition shows how the closed characteristics of the real bidisc $D^2\times D^2$ are approximated by the closed characteristics of the approximating domains $\mathcal{D}_n$. Before stating the result, it is convenient to give the following definition. 
\begin{definition} For all $M>0$ and $\varepsilon>0$ we define 
$$\Sigma_{M,\varepsilon}(\mathcal{D}_n):= (\Sigma(\mathcal{D}_n)\cap[0,M])\setminus \bigcup^{\infty}_{k=1}\,[2k\pi-\varepsilon,2k\pi+\varepsilon].$$
Here $\Sigma(\mathcal{D}_n)$ denotes the action spectrum of $\mathcal{D}_n$.
\end{definition}  
\begin{proposition} \label{4.0}
Let $\alpha\in \Sigma(D^2\times D^2)$ and suppose that $\alpha=\mathcal{A}(\gamma)$, where $\gamma$ is a non-gliding closed characteristic of $D^2\times D^2$. Then there exists a sequence $\gamma_n$, where each $\gamma_n$ is a closed characteristic of $\mathcal{D}_n$, such that $\gamma_n$ converges to $\gamma$ and $\mathcal{A}(\gamma_n)$ converges to $\alpha$. In particular, for all $M>0$ and $\varepsilon>0$, we have
$$ d(\Sigma_{M,\varepsilon}(\mathcal{D}_n),\Sigma_{M,\varepsilon}(D^2\times D^2)) \to 0\text{ for }n\to +\infty, $$
where $d(\cdot,\cdot)$ is the Hausdorff distance between sets.
\end{proposition}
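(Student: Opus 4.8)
The plan is to work with the intrinsic, parametrization-free form of the action. Since the characteristic line field $\mathcal L_n:=\ker\bigl(\om|_{T\di\mathcal D_n}\bigr)$ depends only on $\di\mathcal D_n$, and since $\A(\gamma)=-\tfrac12\oint_\gamma\la J\,d\gamma,\gamma\ra$ is the integral of a fixed $1$-form over the oriented orbit, both the orbits and their actions are geometric objects independent of any choice of defining function or time scaling. It therefore suffices to track how the closed integral curves of $\mathcal L_n$ converge to those of the non-smooth bidisc. First I would write the characteristic field explicitly. With $u=x_1^2+x_2^2$, $v=y_1^2+y_2^2$, $a=n(u-1)$, $b=n(v-1)$, one finds $X_n=J\nabla\rho_n=2n(-g'(b)y_1,\,g'(a)x_1,\,-g'(b)y_2,\,g'(a)x_2)$. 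Because $g$ and $g'$ vanish on $(-\infty,0]$, on the region $v<1$ the field freezes $x$ and moves $y$ in the direction $x$ along the sphere $u=1+1/n$, and symmetrically for $u<1$; these are exactly the straight chords of the bidisc dynamics on the two faces. The only new behaviour lives in the thin corner $\mathcal C_n:=\{1\le u,v\le 1+1/n\}\cap\di\mathcal D_n$, which collapses onto the edge torus $E=\{u=v=1\}$ as $n\to\infty$. Hence a closed characteristic of $\mathcal D_n$ is a concatenation of chords joined by short corner arcs, and the whole problem reduces to controlling the corner-passage map and showing it converges to the bidisc reflection rule.

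To analyze the corner passage I would exploit integrability. A direct computation shows $L:=x_1y_2-x_2y_1$ is conserved along $X_n$, and that in the variables $(u,v,w)$ with $w:=x\cdot y$ the flow satisfies $\dot u=-4n\,g'(b)\,w$, $\dot v=4n\,g'(a)\,w$, $\dot w=2n\bigl(g'(a)u-g'(b)v\bigr)$, together with the Lagrange identity $uv=w^2+L^2$ and the relation $g(a)+g(b)=1$ on $\di\mathcal D_n$. This leaves a one-dimensional reduced motion, so everything is explicitly integrable. Moreover $\mathcal D_n$ is invariant under the diagonal circle action $(x,y)\mapsto(Rx,Ry)$, $R\in SO(2)$, which sits inside $U(2)$ and is therefore symplectic, with $L$ as its momentum. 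Parametrizing $E$ by $x^*=(\cos\phi,\sin\phi)$, $y^*=(\cos\eta,\sin\eta)$ gives $L=\sin(\eta-\phi)$, so reducing by the symmetry amounts to fixing the relative angle $\chi:=\eta-\phi$. In the limit $n\to\infty$ the corner arc shrinks to a point of $E$ and, by the reduced equations, converts an incoming Face-X datum (position $(x^*,y^*)$, $y$ moving in direction $x^*$) into the outgoing Face-Y datum (same position, $x$ moving in direction $-y^*$); this is precisely the switch rule underlying the circle-billiard description of the action spectrum recalled above. Consequently the once-around chord-plus-corner return map on $E$ is, after reduction, a rotation by an angle $\psi_n(\chi)$ with $\psi_n\to\psi$ uniformly on compact sets, where $\psi$ is the corresponding bidisc (circle-billiard) rotation.

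With this in hand I would produce $\gamma_n$ by an intermediate-value argument on the rotation number. The non-gliding orbit $\gamma$ closes up after finitely many reflections and corresponds to a rational value $\psi(\chi_*)\in\pi\Q$. Since $\psi$ genuinely varies with $\chi$ (a twist condition) and $\psi_n\to\psi$, for all large $n$ the continuous function $\chi\mapsto\psi_n(\chi)$ still attains that same rational value at some $\chi_n\to\chi_*$; the associated resonant orbit is a closed characteristic $\gamma_n$ of $\mathcal D_n$ of the same combinatorial type as $\gamma$, whose chords converge to those of $\gamma$. As the action is the $C^1$-continuous line integral above, $\A(\gamma_n)\to\A(\gamma)=\alpha$. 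For the final Hausdorff statement, the orbit convergence just obtained places $\Sigma_{M,\varepsilon}(D^2\times D^2)$ inside a shrinking neighbourhood of $\Sigma(\mathcal D_n)$; for the reverse inclusion I would take any closed characteristic of $\mathcal D_n$ with action in $[0,M]$ and bounded away from every $2k\pi$, bound its number of chords from above, and extract via Arzel\`a--Ascoli a limit that is a characteristic of the bidisc. The bound on the number of chords is exactly where the excision enters: near-gliding billiard orbits, those whose reflection angles tend to tangency, have actions accumulating at the values $2k\pi$, so excluding the intervals $[2k\pi-\varepsilon,2k\pi+\varepsilon]$ forces a uniform lower bound on chord length and hence finitely many chords. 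The resulting limit is therefore non-gliding, its action lies in $\Sigma(D^2\times D^2)$, and the two inclusions give Hausdorff convergence.

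I expect the main obstacle to be the rigorous control of the corner passage: proving that, uniformly in the incoming data on a compact transversal, the excursion through $\mathcal C_n$ stays within $O(1/n)$ of its entry point on $E$ and exits with exactly the switched direction, so that $\psi_n\to\psi$ with the twist preserved. The integrable reduction to $(u,v,w)$ and the exact conservation of $L$ should make this tractable, but verifying the twist needed for the intermediate-value step, and simultaneously ruling out gliding accumulation in the reverse inclusion, are the delicate points.
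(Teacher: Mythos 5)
Your proposal follows essentially the same route as the paper's proof: decompose $\partial\mathcal{D}_n$ into the two flat faces (where characteristics are straight chords) and the collapsing corner region, reduce the corner dynamics to the integrable system in $(r_1^2,r_2^2,x\cdot y)$ with $x_1y_2-x_2y_1$ conserved, show the angular drift across the corner tends to zero, and close up orbits of the prescribed combinatorial type by a continuity/intermediate-value argument, with the excised neighborhoods of $2k\pi$ absorbing the gliding-type characteristics. The argument is correct and matches the paper's; your treatment of the reverse Hausdorff inclusion is somewhat more explicit than the paper's, but it is the same approach.
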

\begin{proof}
Note that, for every $n$, we can decompose the boundary $\partial \mathcal{D}_n$ of $\mathcal{D}_n$ into three components
\begin{equation}\label{4.2}\begin{split}
X^3_n:=\left\{ (z_1,z_2)\in\C^2, \ x^2_1+x^2_2 -1< 0,  y^2_1+y^2_2= 1+\frac 1n \right\} \\ 
Y^3_n:=\left\{ (z_1,z_2)\in\C^2, \ y^2_1+y^2_2 -1< 0,  x^2_1+x^2_2= 1+\frac 1n \right\}
\end{split}
\end{equation} 		
 and
\begin{equation}\label{4.3}
T^3_n:=\partial\mathcal{D}_n\setminus (X^3_n\cup Y^3_n).
\end{equation} 				
To find the closed characteristics of $\mathcal{D}_n$, we consider the system of differential equations
\begin{equation}\label{4.4}
\begin{cases}
\dot{x} =-g'(n(|y|^2 -1))2ny \\
\dot{y} =g'(n(|x|^2 -1))2nx,
\end{cases} 
\end{equation}
where $x=(x_1,x_2), y=(y_1,y_2)$. We first note that $\det({x,y})=x_1y_2-x_2y_1$ is constant along the solutions of \eqref{4.4}. It is convenient to use polar coordinates in the planes defined by the variables $x$ and $y$ respectively. We recall the notation already introduced in the proof of Proposition \ref{s1}:
\begin{equation*}\begin{split}
r_1e^{i\varphi_1}:=r_1(\cos\varphi_1,\sin\varphi_1)=(x_1,x_2)\\
r_2e^{i\varphi_2}:=r_2(\cos\varphi_2,\sin\varphi_2)=(y_1,y_2).
\end{split}
\end{equation*}
%We make the obvious remark that the complex structure described in \eqref{4.5} is not related to the complex structure of $\C^2$. 
We can now rewrite the system \eqref{4.4} as
\begin{equation}\label{4.6}
\begin{cases}
(\dot{r_1} +ir_1\dot{\varphi_1})e^{i\varphi_1}=-g'(n(r^2_2 -1))2nr_2e^{i\varphi_2} \\
(\dot{r_2} +ir_2\dot{\varphi_2})e^{i\varphi_2}=g'(n(r^2_1 -1))2nr_1e^{i\varphi_1},
\end{cases} 
\end{equation}
from which we obtain the two systems
\begin{equation}\label{4.7}
\begin{cases}
r_1\dot{\varphi_1}=g'(n(r^2_2-1))2nr_2\sin(\varphi_1 -\varphi_2) \\
r_2\dot{\varphi_2}=g'(n(r^2_1-1))2nr_2\sin(\varphi_1 -\varphi_2) \\
r_1r_2\sin(\varphi_1-\varphi_2)=\det(x,y)=\const
\end{cases}
\end{equation}
and 
\begin{equation}\label{4.8}
\begin{cases}
r_1\dot{r_1}=-g'(n(r^2_2 -1))2nr_3 \\
r_2\dot{r_2}=g'(n(r^2_1 -1)2nr_3 \\
\dot{r_3}=2n(g'(n(r_1^2-1))r^2_1-g'(n(r^2_2-1))r^2_2).
\end{cases}
\end{equation}
Here $r_3:=x\cdot y=x_1y_1+x_2y_2=r_1r_2\cos(\varphi_1-\varphi_2)$.

Clearly the characteristics in $\mathcal{D}_n$ are straight segments when they lie on $X^3_n$ or $Y^3_n$. We now want to understand the  behavior of the characteristics at their intersections with $T^3_n$.
Let us consider the Cauchy problem \eqref{4.4} with initial data $x(0)=(1,0)$ and $y(0)=\sqrt{1+\frac 1n}(\cos\theta_0,\sin\theta_0)$ for $\theta_0\in (0,\pi/2)$. The corresponding solution of \eqref{4.4} enters $X^3_n$ 
(this can be seen by inspection of \eqref{4.8}, since $r_3(0)>0$ implies that $r_1$ is decreasing and $r_2$ is constant). Reasoning as in Proposition \ref{s1}, we see that the solution reaches the point 
$$(x_1,x_2)=\big{(}\cos(\pi +2\theta_0),\sin(\pi+2\theta_0)\big{)},\quad (y_1,y_2)=\sqrt{1+\frac 1n}(\cos\theta_0,\sin\theta_0).$$ The solution then enters $T^3_n$ at a time $t_0$. 
%Since we are dealing with autonomous systems, we can assume without loss of generality that $t_0=0$. 
%We want to show that the solution leaves $T^3_n$ 
%\textcolor{red}{approximately} at the point 
%\begin{equation*}
%(x_1,x_2)=\sqrt{1+\frac 1n}(\cos(\pi +2\theta_0),\sin(\pi+2\theta_0)),\quad (y_1,y_2)= (\cos\theta_0,\sin\theta_0).
%\end{equation*}
From \eqref{4.8} we see that $r_3(t_0)<0$, $r_1$ increases and $r_2$ decreases. Let $T>0$ be the smallest positive real number such that $\dot{r_3}(t_0+T)=0$. By simmetry we see that, $r_1(t)=r_2(2T+2t_0-t)$, $r_2(t)=r_1(2T+2t_0-t)$ and $r_3(t)=r_3(2T+2t_0-t)$. In particular, this tells us that the solution eventually leaves $T^3_n$. By the third equation in \eqref{4.7}, the angle between $x(t_0+2T)$ and $y(t_0+2T)$ is the same as the 
angle between $x(t_0)$ and $y(t_0)$ but $\varphi_1(t_0)\neq \varphi_1(t_0+2T)$. We want to compute $\Delta\varphi:=\varphi_1 (t_0+2T)-\varphi_1 (t_0)=\varphi_2 (t_0+2T)-\varphi_2 (t_0)$. By \eqref{4.7}
\begin{equation}\label{4.9}
\begin{split}
\Delta\varphi&=\int^{2T}_0 \frac{g'(n(r^2_2-1))2nr_2}{r_1}2\sin(\varphi_1-\varphi_2)dt \\ 
&=\int^{2T}_0 \frac{g'(n(r^2_2-1))2n}{r^2_1} \sqrt{1+\frac 1n}\sin\theta_0dt. 
\end{split}  
\end{equation}
Since $r_3 =x\cdot y =r_1r_2 \cos(\varphi_1 -\varphi_2)$, equation \eqref{4.8} implies
\begin{equation}\label{4.10}
\begin{split}
 \dot{(r_1^2)}&=-g'(n(r^2_2-1))2n r_1r_2\cos(\varphi_1-\varphi_2)\\ &=-g'(n(r^2_2 -1))2n \sqrt{r^2_1r^2_2 -\Big{(}1+\frac 1n\Big{)}\sin^2\theta_0}.
\end{split}
\end{equation}
Using \eqref{4.10} inside \eqref{4.9} we obtain
\begin{equation}\label{4.11}
\Delta\varphi=-\int^{1+\frac 1n}_1 \frac{\sqrt{1+\frac 1n}\sin(\theta_0)}{r^2_1 \sqrt{r^2_1r^2_2 -\big{(}1+\frac 1n\big{)}\sin^2(\theta_0)}}d(r^2_1).
\end{equation}
It follows from $$g(n(r^2_1-1))+g(n(r^2_2-1))=1$$ that 
\begin{equation}\label{questa}
r^2_2 =1+\frac{g^{-1}(1-g(n(r^2_1-1)))}n. 
\end{equation}
Plugging \eqref{questa} into \eqref{4.11} we obtain
\begin{equation}\label{la12}
\Delta\varphi =-\int^{1+\frac 1n}_1 \frac{\sqrt{1+\frac 1n}\sin\theta_0}{u \sqrt{u(1+\frac{g^{-1}(1-g(n(u-1)))}n) -(1+\frac 1n)\sin^2\theta_0}}du.
\end{equation}
From \eqref{la12} we see that $\Delta\varphi$ is small for $\theta_0 <\frac \pi 2$ and $n$ large.
Following the same reasoning as in Proposition \ref{s1} we can see that after $2m$ straight sides the characteristic hits the point
$$ P_{2m}=\bigg{(} (-1)^m e^{i(2m(\theta +\Delta\varphi))},(-1)^m\sqrt{1+\frac 1n}e^{i((2m+1)\theta +2m\Delta\varphi)}\bigg{)}.$$
The characteristic is closed if and only if 
\begin{equation}\label{cc1}
\begin{cases}
\theta+\Delta\varphi=\frac{k\pi}{m} \text{ for } k=0,...,\frac m2 -1 \text{ if $m$ is even} \\
\theta+\Delta\varphi=\frac{2k-1}{2m}\pi \text{ for } k=1,...,\frac{m-1}2\text{ if $m$ is odd.} 
\end{cases}
\end{equation}
Since $\Delta\varphi$ depends continuously on $\theta_0$ and is small if $n$ is large, then the equations in \eqref{cc1} are solvable.
In particular, if $\theta_{k,m}\in J_m$ (see Proposition \ref{s1}), for $n$ big enough there exists $\theta$ close to $\theta_{k,m}$ such that \eqref{cc1} is satisfied. The corresponding characteristics of $\mathcal{D}_n$ and $D^2\times D^2$ 
are close to each other and their actions are also close.
Note that there could also be some characteristics that are entirely contained in $T^3_n$. These characteristics are left out by the description above. If $n$ is large, they are close to the gliding trajectories of the bidisc $D^2\times D^2$ and their actions are close to a multiple of $2\pi$. 
\end{proof}

\section{The Ekeland-Hofer symplectic capacities of the real bidisc}\label{SectionCap}
In this section we compute the symplectic capacities of the real bidisc $D^2\times D^2$. The following result will be the main tool for our computations.
\begin{proposition}\label{ckfinite} Let $\Omega$ be a smooth convex domain in $\C^n$ containing $0$, and let $r$ be its gauge function. For all $c>0$, let $\Psi_c:E\rightarrow \R$ be the functional
\begin{equation}\label{Psi}
 \Psi_c(\zeta):=\mathcal{A}(\zeta)-c\int^1_0 r(\zeta(t))\, dt .
 \end{equation}
If $W\subset E$ is an invariant subset of pseudoindex at least $k$ such that $\Psi_c|_W \le 0$, then $c_k(\Omega)\le c$.
%\begin{equation}\label{Wb}
%\Psi|_W \le 0
%\end{equation}

\end{proposition}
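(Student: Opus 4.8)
The plan is to recognize the penalty term $c\int_0^1 r(\zeta(t))\,dt$ as (essentially) the potential of a Hamiltonian functional of the gauge type discussed in Remark~\ref{nuovoremmark}, and then to feed $W$ itself into the minimax that defines the capacity. Concretely, I would work with Hamiltonians of the form $H=f(r)$ with $f$ linear at infinity, for which Remark~\ref{nuovoremmark} guarantees that the $c_{f(r),k}$ still compute the Ekeland--Hofer capacities, and I would choose $f$ so that the minimax level at pseudoindex $k$ is forced below $c$.

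First I would extract the only consequence of the hypothesis that gets used: since $\Psi_c|_W\le 0$, every $\zeta\in W$ satisfies $\mathcal{A}(\zeta)\le c\int_0^1 r(\zeta(t))\,dt$. Next, for each $\delta>0$ I would build a smooth convex increasing function $f=f_\delta\colon\R\to[0,+\infty)$ with $f(s)=0$ for $s\le 1$, with $f(s)\ge c(s-1)$ for $s\ge 1+\delta$, and linear at infinity with slope larger than $c$ chosen outside the resonance set, so that $f(r)$ has no periodic orbit of period $1$ at infinity (this is exactly the hypothesis under which Remark~\ref{nuovoremmark} applies). The purpose of these conditions is the pointwise estimate $c\,s-f(s)\le c(1+\delta)$ for all $s\ge 0$: off the transition interval it is in fact $\le c$ (for $s\le 1$ because $f(s)=0$, and for $s\ge 1+\delta$ because $f(s)\ge c(s-1)$), while on $[1,1+\delta]$ one simply uses $f\ge 0$ together with $c\,s\le c(1+\delta)$.

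Combining the two facts, for every $\zeta\in W$ I get
$$\mathcal{A}_{f(r)}(\zeta)=\mathcal{A}(\zeta)-\int_0^1 f(r(\zeta(t)))\,dt\le \int_0^1\big(c\,r(\zeta(t))-f(r(\zeta(t)))\big)\,dt\le c(1+\delta),$$
so that $\sup_W \mathcal{A}_{f(r)}\le c(1+\delta)$. Since $W$ is invariant with $\ind(W)\ge k$, it is one of the sets allowed in the minimax defining $c_{f(r),k}$, whence $c_{f(r),k}\le \sup_W\mathcal{A}_{f(r)}\le c(1+\delta)$. Invoking Remark~\ref{nuovoremmark}, which says that gauge Hamiltonians of this type compute the same capacities, I obtain $c_k(\Omega)\le c_{f(r),k}\le c(1+\delta)$, and letting $\delta\to 0$ yields $c_k(\Omega)\le c$.

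The step I expect to be the genuine obstacle is the passage, through Remark~\ref{nuovoremmark}, from the gauge Hamiltonian $f(r)$ (only linear at infinity, and vanishing merely on $\overline{\Omega}$ rather than on a full neighborhood) to the quadratic-at-infinity admissible Hamiltonians of the original definition. The inequality $c\,s-f(s)\le c$ fails for any smooth $H$ that is forced to vanish on a neighborhood of $\overline{\Omega}$, which is precisely why the gauge model cannot be circumvented; what must be checked is that replacing $f(r)$ by a quadratic function outside a large ball leaves the relevant critical values unchanged, and this is exactly where the absence of period-$1$ orbits at infinity enters, guaranteeing the Palais--Smale condition for $\mathcal{A}_{f(r)}$. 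Granting this, the construction of $f_\delta$ with all the listed properties at once is routine.
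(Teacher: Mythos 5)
Your proof is correct, and it departs from the paper's at the decisive step. Both arguments test $W$ against a gauge Hamiltonian $H=f(r)$ vanishing on $\{r\le 1\}$ and linear at infinity, and both ultimately rest on Remark \ref{nuovoremmark} for the comparison $c_k(\Omega)\le c_{f(r),k}$; the difference is how the bound $c_{f(r),k}\le c$ is obtained. The paper makes no attempt to control $c\,s-f_\epsilon(s)$ sharply: it only gets $\sup_W\mathcal{A}_{H_{f_\epsilon}}\le D<\infty$ for a crude constant $D$, so it must then invoke the Palais--Smale condition to identify $c_{H_{f_\epsilon},k}$ with a critical value of $\mathcal{A}_{H_{f_\epsilon}}$, and the explicit description of the period-$1$ orbits of $f_\epsilon(r)$ (rescalings $\sqrt{s_j}\,\gamma_j(\alpha_j t)$ of closed characteristics, with critical values $s_j\alpha_j-f_\epsilon(s_j)\to\alpha_j\le c$) to force that level below $c$. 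You instead concentrate the transition of $f_\delta$ in $[1,1+\delta]$ so that $c\,s-f_\delta(s)\le c(1+\delta)$ pointwise, which yields $c_{f_\delta(r),k}\le\sup_W\mathcal{A}_{f_\delta(r)}\le c(1+\delta)$ directly from the minimax definition, with no dynamical or variational input at all. Consequently the Palais--Smale issue you single out at the end is not actually an obstacle for your route, since you never need the minimax level to be a critical value; the only point genuinely requiring care is the one both proofs share, namely that $f(r)$ be dominated by an admissible Hamiltonian so that $c_k(\Omega)\le c_{f(r),k}$, and for this you should let $f_\delta$ vanish on a slightly larger interval $[0,1+\delta']$, which perturbs your constant harmlessly. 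Your version is shorter and more elementary; the paper's, in exchange, shows that the level $c_{H_{f_\epsilon},k}$ is realized on a rescaled closed characteristic, i.e., it locates the capacity inside the action spectrum of $\Omega$.
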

\begin{proof} Let $\epsilon >0$ and choose a smooth function $f_\epsilon :[0,\infty)\rightarrow [0,\infty) $ such that
\begin{equation}
f_\epsilon (s) =\begin{cases} 0 \,\,\,\,\,\,\,\,\,\,\,\,\,\,\,\,\,\,\,\text{ if $s\le 1$} \\
(c+\epsilon)s \,\,\text{ if $s$ is large.}
\end{cases}
\end{equation}
Moreover, we require that $0\le f_{\epsilon}'(s)\le c+2\epsilon$ for all $s$, and that $f_{\epsilon}'(s) \in \Sigma(\Omega)$ only for finitely many numbers $s_1,\dots,s_m$, which we can assume to be arbitrarily close to $1$.
Let $f_{\epsilon}'(s_j)=\alpha_j\in\Sigma(\Omega)$ and define $H_{f_\epsilon}(z):= f_\epsilon(r(z))$. The periodic orbits of $\dot{z}=J\nabla H_{f_\epsilon}(z)$ of period $1$ are obtained by scaling. Namely, they are the curves $\sqrt{s_j}\gamma_j(\alpha_j t)$, where $\gamma_j $ is the closed characteristic in $\partial\Omega$ such that $\mathcal{A}(\gamma_j)=\alpha_j$. The corresponding critical values of $\mathcal{A}_{H_{f_\epsilon}}$ are 
\begin{equation}\label{cv}
\mathcal{A}_{H_{f_\epsilon}}(\sqrt{s_j}\gamma_{j}(\alpha_j t))=s_j\alpha_j -f_{\epsilon}(s_j).
\end{equation}
As $\epsilon \to 0$, these critical values tend to $\alpha_j$, and each of them is less than $c$.  Note that
\begin{equation*} 
\begin{split}
\mathcal{A}_{H_{f_\epsilon}}(\zeta)&=\Psi_c(\zeta) +\int^1_0 \big{[}cr(\zeta(t))-f_{\epsilon}(r(\zeta(t)))\big{]}dt\\
&\le \Psi_c (\zeta) + \int^1_0 \big{[}C-\epsilon r(\zeta(t))\big{]}dt\le \Psi_c (\zeta) +D 
\end{split}
\end{equation*}
for some constants $C$ and $D$. Recalling that $\Psi|_W \le 0$, then
\begin{equation}\label{this}
\mathcal{A}_{H_{f_\epsilon}}|_W \le C
\end{equation}
for some new constant $C$. Equation \eqref{this} implies $c_{{H_{f_\epsilon}},k} <\infty$. Since there are no periodic orbits at infinity of period $1$, then $\mathcal{A}_{H_{f_\epsilon}}$ satisfies the Palais-Smale condition and therefore $c_{{H_{f_\epsilon}},k}$ is a critical value of $\mathcal{A}_{H_{f_\epsilon}}$. Hence $c_{{H_{f_\epsilon}},k}\le c$ by \eqref{cv}.
Since $c_k(\Omega)\le c_{{H_{f_\epsilon}},k}$, the conclusion follows.
\end{proof}

In the next theorem we compute the first Ekeland-Hofer capacity of the real bidisc. A different proof of the same result appears in \cite{AO14}.

\bt \label{c1} For the unit real bidisc $D^2\times D^2$ we have $c_1(D^2\times D^2) =4$. \et
\begin{proof} 
Recall that $4=\min\Sigma(D^2\times D^2)$ (Remark \ref{rmk}). Theorem \ref{4.0} then implies the existence of a sequence of closed characteristics $\gamma_n\subset \partial \mathcal{D}_n$ with $\alpha_n:=\mathcal{A}(\gamma_n)\to 4$. By Theorem \ref{c1smooth} we have 
\begin{equation}\label{capa}
\lim_{n\to +\infty} c_1(\mathcal{D}_n)= 4 .
\end{equation}
Now observe, from the construction of the $\mathcal{D}_n$, that $$\frac{1}{\sqrt{1+\frac 1n}}\mathcal{D}_n\subset D^2\times D^2\subset  \mathcal{D}_n.$$ The basic properties of the capacities then yield the double inequality 
$$ \frac 1{1+\frac 1n}c_1(\mathcal{D}_n)\le  c_1(D^2\times D^2)\le c_1(\mathcal{D}_n),$$
from which we can conclude that $c_1(D^2\times D^2) =4$ by applying \eqref{capa}.
 \end{proof}
For the next theorem we need the following simple lemma.
\bl \label{stima} Let $f(t)=\sum_{k\in\Z} f_k e^{2k\pi i t }$ be an $L^1$-convergent series. Then, for every integer $n$,
\begin{equation*}
\int^1_0 |f(t)| dt \ge  |f_n|. 
\end{equation*}
\el
\begin{proof} For every $n$ we have 
\begin{equation*}
\begin{split}
\int^1_0 \big{|} \sum_{k\in\Z} f_k e^{2k\pi i t }\big{|}dt &= \int^1_0 \big{|}e^{2n\pi i t} \sum_{k\in\Z} f_k e^{2(k-n)\pi i t }\big{|}dt \\& \ge \left| \int^1_0  \sum_{k\in\Z} f_{k+n} e^{2k\pi i t } dt \right| =|f_n|.
\end{split}
\end{equation*}
\end{proof}
%Nuova dimostrazione per c_2
\bt\label{c2bis} For the real bidisc $D^2\times D^2$ we have $c_2(D^2\times D^2)=3\sqrt{3}$. \et
\begin{proof}

Let $W$ be the following invariant subset of $E$:
$$ W:=\bigg{\{}(\alpha,\beta) e^{2\pi i t} +\gamma([\alpha:\beta])\Big{(}\overline{\alpha} \frac{\alpha^3}{|\alpha|^3},\overline{\beta} \frac{\alpha^3}{|\alpha|^3}\Big{)}e^{4\pi i t}\,\Big{\vert}\,\alpha,\beta\in \C \bigg{\}}\oplus E^0\oplus E^-, $$
where $\gamma:\P^1_\C \rightarrow \R$ is a non-negative continuous function which is non-zero only in a neighborhood of the two points $[1:i]$ and $[1:-i]$.  We will specify later how $\gamma$ is chosen.
We now prove %following \cite{EH90} 
that $W$ has pseudoindex $2$.  
First note that 
\begin{equation*}
\begin{split}
W\cap S \cap E^+=\bigg{\{}(\alpha,\beta) e^{2\pi i t} +\gamma([\alpha:\beta])\Big{(}\overline{\alpha} \frac{\alpha^3}{|\alpha|^3},\overline{\beta} 
\frac{\alpha^3}{|\alpha|^3}\Big{)}&e^{4\pi i t}\,\Big{\vert}\,\alpha,\beta\in \C, \\ &(|\alpha|^2+|\beta|^2)(1+\gamma^2)=1\bigg{\}}, 
\end{split}
\end{equation*}
and therefore $W\cap S \cap E^+$ has index $2$. Assume now by contradiction that there exists $h\in \Gamma$ such that $F:=h(W)\cap E^+ \cap S$ has index strictly smaller than $2$.
Then, by the properties of the index, there exists an open neighborhood of $U$ of $F$ in $E$ such that $\alpha(U)=\alpha(F)$. Let $E_k=\{f\in E|\ f_j=0 \text{ for }|j|>k\}$ and 
denote by $Q_k:E\rightarrow E_k$ the corresponding orthogonal projection. We claim, for $k$ large, that
\begin{equation}\label{27}
 (Q_k (h(W\cap E_k)))\cap S\cap E^+ \subset U.
\end{equation} 
Assume by contradiction that \eqref{27} is false. Then there exists a sequence of functions $f_k \in E_k \cap W$
such that $Q_k(h(f_k))\in E^+\cap S$ and $ Q_k(h(f_k))\notin U$.
Recalling the structure of the homeomorphism $h$ (see \eqref{groupGamma}), we have
\begin{equation}\label{indW1}
\begin{cases}
f^0_k+e^{\gamma^- (f_k)}f^-_k +(P^-+P^0)Q_kK(f_k)=0 \\
\norm{e^{\gamma^+(f_k)}f_k^+ +P^+Q_kK(f_k)} =1.
\end{cases}
\end{equation}
Note that $f_k$ must be a bounded sequence, otherwise we have $K(f_k)=0$, which together with \eqref{indW1} implies $\|f^+_k\|=1$
and $f^0_k=f^-_k=0$, thus giving a contradiction.
We can therefore assume that $K(f_k)$ and $\gamma^{\pm}(f_k)$ converge. Hence, by \eqref{indW1}, the sequences $f^-_k$ and $f^0_k$ also converge.
Furthermore, the sequence $f^+_k$ converges as well, since it lies in a finite dimensional space. We therefore have that $f_k$ converges to some element $f_{\infty}\in W$ with $h(f_{\infty})\in E^+\cap S$ and $h(f_\infty)\notin U$, which is a contradiction.

In order to apply \cite[Proposition 3.3]{FHR82} as done in \cite[page 558]{EH90}, we consider the following ``truncated" set: for $M>0$ let
\begin{equation*}
\begin{split}
 W_M:=\bigg{\{}(\alpha,\beta) e^{2\pi i t} &+ \chi\bigg{(}\frac{|\alpha|^2+|\beta|^2+\|f^0+f^-\|^2}M\bigg{)}\gamma([\alpha:\beta])\Big{(}\overline{\alpha} \frac{\alpha^3}{|\alpha|^3},\overline{\beta} \frac{\alpha^3}{|\alpha|^3}\Big{)}e^{4\pi i t}\\&+f^0+f^-\,\Big{\vert}\,\alpha,\beta\in \C, f^0\in E^0, f^-\in E^-\bigg{\}},
 \end{split}
\end{equation*}
where $\chi$ is a smooth function such that $\chi(t)=1$ for $t<1$ and $\chi(t)=0$ for $t>\frac32$.
Note that $W_M$ coincides with $W$ inside the ball of radius $M$ in $E$ and that $h(W_M)\cap E^+\cap S=h(W)\cap E^+\cap S$.
Consider now the equivariant map
\begin{equation*}
\phi:\C^2\oplus E^0 \oplus (E^-\cap E_k) \rightarrow W_M
\end{equation*}
\begin{equation}
\begin{split}
\phi(\alpha,\beta,f^0,f^-_k)=&\chi\bigg{(}\frac{|\alpha|^2+|\beta|^2+\|f^0+f^-\|^2}M\bigg{)}\gamma([\alpha:\beta])\Big{(}\overline{\alpha} \frac{\alpha^3}{|\alpha|^3},\overline{\beta} \frac{\alpha^3}{|\alpha|^3}\Big{)}e^{4\pi i t}\\&+(\alpha,\beta) e^{2\pi i t}+f^0+f^-_k
\end{split}
\end{equation}
By \cite[Proposition 3.3]{FHR82} applied to the map $Q_k h(\phi)$ we obtain $$\alpha(Q_k(W_M)\cap E^+\cap S) \ge 2.$$ The conclusion that $\ind(W)=2$ is achieved by taking $M$ large enough.
%fine dimostrazione dell'indice =2

Let now $r$ be the gauge function of $D^2\times D^2$ and $\Psi_c:E\rightarrow \R$ the functional defined in \eqref{Psi}:
$$ \Psi_c(\zeta):=\mathcal{A}(\zeta)-c\int^1_0 r(\zeta(t))\, dt .$$
We recall that
$$r(z_1,z_2)=\frac{|z_1|^2 +|z_2|^2}2 +\frac{|\Re (z_1^2+z_2^2)|}2.$$
We will prove that $\Psi_c|_{W} <0$ for $c=4\sqrt{2}$.

Let $v=(\zeta_1,\zeta_2)\in W$. Then
$$v=v_1e^{2\pi i t}+v_2e^{4\pi i t}+\sum^{+\infty}_{k=0} w_{-k}e^{-2k\pi i t},$$
where $v_1=(\alpha,\beta)$ and $v_2=\gamma \frac{\alpha^3}{|\alpha|^3}(\overline{\alpha},\overline{\beta})$ for some $\alpha,\beta\in\C.$
We have
\begin{equation}\label{c21}
\begin{split}
\Psi_c(v)=&\Big{(}\pi -\frac{c}{2}\Big{)}|v_1|^2+\Big{(}2\pi -\frac c2\Big{)}|v_2|^2 -\sum_{k=1}^{+\infty}k\pi |w_{-k}|^2 \\&-\frac c2 \sum_{k=0}^{+\infty}|w_{-k}|^2-\frac c2 \int^1_0 \Big{|}\Re\big{(}\zeta_1^2(t)+\zeta_2^2(t)\big{)}\Big{|}dt .\\
\end{split}
\end{equation}
To estimate the integral on the right side of \eqref{c21} we compute the Fourier coefficients of order 4 and 6 of $\Re\big{(}\zeta_1^2(t)+\zeta_2^2(t)\big{)}$. We denote them respectively by $I_4$ and $I_6$. 
\begin{equation}\label{c22}
2I_4 =v_1^2 +2(v_2\cdot w_0)+\overline{(w_{-1})^2 +2(w_{-2}\cdot w_{0})+2(w_{-3}\cdot v_{1})+2(w_{-4}\cdot v_{2})},
\end{equation}
\begin{equation}\label{c23}
2I_6=2(v_{2}\cdot v_{1})+\overline{2(w_{-1}\cdot w_{-2})+2(w_{0}\cdot w_{-3})+2(v_{1}\cdot w_{-4})+2(v_{2}\cdot w_{-5})}.
\end{equation}
By Lemma \ref{stima} applied to $I_6$ and equation \eqref{c23} we get
\begin{equation}\label{I6}
 \int^1_0 \Big{|}\Re\big{(}\zeta_1^2(t)+\zeta_2^2(t)\big{)}\Big{|}dt \ge |I_6|\ge |v_2\cdot v_1|-|w_{-1}\cdot w_{-2}|-|w_{0}\cdot w_{-3}|-|v_{1}\cdot w_{-4}|-|v_{2}\cdot w_{-5}|.
 \end{equation}
Since all the functionals on the right side of \eqref{I6}, which we are going to estimate, are homogeneous, it is not restrictive to assume that $|\alpha|^2+|\beta|^2=1$. In particular, $|v_2\cdot v_1|=|\gamma|$. Applying the Cauchy-Schwarz inequality we obtain the estimates
\begin{equation}\label{CSS}
|w_{-1}\cdot w_{-2}|\le \frac 12 \big{(}|w_{-1}|^2+|w_{-2}|^2\big{)} \,\,\text{ and }\,\, |w_{0}\cdot w_{-3}|\le \frac 12 \big{(}|w_{0}|^2+|w_{-3}|^2\big{)}.
\end{equation} 
For the terms $|v_{1}\cdot w_{-4}|$ and $|v_{2}\cdot w_{-5}|$ we give small and large constant to obtain
\begin{equation} \label{CSSP}
\begin{split}
2|v_{1}\cdot w_{-4}|&\le \frac{\frac c2 +4\pi}{\frac  c4}|w_{-4}|^2 +\frac{\frac c4}{\frac c2 +4\pi} \\
2|v_{2}\cdot w_{-5}|&\le \frac{\frac c2 +5\pi}{\frac  c4}|w_{-5}|^2 +\frac{\frac c4}{\frac c2 +5\pi}\gamma^2,
\end{split}
\end{equation}
where we have used that $|v_1|=1$ and $|v_2|^2=\gamma^2$.
Combining the estimate for the integral in \eqref{I6} with \eqref{CSS} and \eqref{CSSP} and plugging into \eqref{c21}, we see that
\begin{equation}\label{neg}
\Psi_c(v)<\left(\pi-\frac c2 -\frac c2 \gamma +\frac{\left(\frac c4\right)^2}{\frac c2 +4\pi}\right) +\left(2\pi-\frac c2+\frac{\left(\frac c4\right)^2}{\frac c2 +5\pi}\right)\gamma^2+\dots,
\end{equation}
where dots stand for negative terms that arise after absorbing the terms that involve $w_{-4}$ and $w_{-5}$ on the right side of \eqref{CSSP} with the terms $(-4\pi -\frac c2)|w_{-4}|^2$ and 
$(-5\pi-\frac c2)|w_{-5}|^2$ on the right side of \eqref{c21}. The right side of \eqref{neg} is negative if 
\begin{equation}\label{inequa}
\left(2\pi-\frac c2+\frac{\left(\frac c4\right)^2}{\frac c2 +5\pi}\right)\gamma^2 -\frac c2 \gamma +\left(\pi-\frac c2 +\frac{\left(\frac c4\right)^2}{\frac c2 +4\pi}\right) <0.
\end{equation}
For $c=4\sqrt{2}$ the inequality in \eqref{inequa} is satisfied by the solutions to
$$ 0.44-2.82\gamma+3.56\gamma^2 <0.$$
In particular, $\Psi_c(v)<0$ for 
\begin{equation}\label{rangegamma}
0.22\le \gamma\le 0.58 .
\end{equation}
Using $I_4$ in place of $I_6$, we get that
\begin{equation}\label{I_4}
\begin{split}
 \int^1_0 \Big{|}\Re\big{(}\zeta_1^2(t)+\zeta_2^2(t)\big{)}\Big{|}dt\ge |I_4|\ge &|\alpha^2+\beta^2 |-|v_{2}\cdot w_{0}|-|w_{0}\cdot w_{-2}|\\&-|v_{1}\cdot w_{-3}|-|v_{2}\cdot w_{-4}|-\frac{|w_{-1}|^2}2.\\
\end{split}
\end{equation}

We first estimate
\begin{equation}\label{estimate}
\begin{split}
2|v_{2}\cdot w_{0}|&\le \frac{\frac 34 c +4\pi}{\frac c2 +2\pi}|w_0|^2+\frac{\frac c2 +2\pi}{\frac 34 c +4\pi}\,\gamma^2,\\ 
2|w_{0}\cdot w_{-2}|&\le \frac{\frac c2 +2\pi}{\frac c4} |w_{-2}|^2 +\frac{\frac c4}{\frac c2 +2\pi}\,|w_0|^2, \\
2|v_{1}\cdot w_{-3}|&\le \frac{\frac c2 +3\pi}{\frac c4}|w_{-3}|^2+\frac{\frac c4}{\frac c2 +3\pi}, \\
2|v_{2}\cdot w_{-4}|&\le \frac{\frac c2+4\pi}{\frac c4}|w_{-4}|^2+\frac{\frac c4}{\frac c2 +4\pi}\,\gamma^2. 
\end{split}
\end{equation}
We have used again that $|v_1|^2=1$ and $|v_2|^2=\gamma^2$. Combining \eqref{estimate} with \eqref{I_4} and replacing inside \eqref{c21} we obtain
\begin{equation*}
\begin{split}
\Psi_c(v)\le &\left( \pi-\frac c2 -\frac c4|\alpha^2+\beta^2| +\frac{\left(\frac c4 \right)^2}{\frac c2 +3\pi}\right) \\ &+\left(2\pi-\frac c2 +\frac c4 \left(\frac{\frac c2 +2\pi}{\frac 34c+4\pi} 
+\frac{\frac c4}{\frac c2 +4\pi}\right)\right)\gamma^2 +\dots
\end{split}
\end{equation*}
Once again, dots indicate negative terms which come after absorption with the negative terms involving $|w_k|^2$ for $k=0,-2,-3,-4$.
Hence $\Psi_c(v)<0$ if 
\begin{equation}\label{ineq}
\left( \pi-\frac c2 +\frac{\left(\frac c4 \right)^2}{\frac c2 +3\pi}\right)-\frac c4|\alpha^2+\beta^2|  +\left(2\pi-\frac c2 +\frac c4 \left(\frac{\frac c2 +2\pi}{\frac 34c+4\pi}\right)+\frac{\frac c4}{\frac c2 +4\pi}\right)\gamma^2<0.
\end{equation}
For $c=4\sqrt{2}$, the inequality \eqref{ineq} is satisfied by the solutions to
\begin{equation}\label{ciao}
 0.477-1.41|\alpha^2+\beta^2|+4.352\gamma^2<0 .
\end{equation} Letting $\gamma_0=0.23$ by \refeq{rangegamma} and solving the corresponding equation to \eqref{ciao}, we can define $\delta_0:=\frac{0.47+4.22\cdot0.23^2}{1.41}=0.496$.
We then choose a continuous function $\gamma$ such that $0\le\gamma\le\gamma_0$ and
\begin{equation}
\gamma([\alpha:\beta])=
\begin{cases}
\gamma_0 \,\,\text{ if }|\alpha^2+\beta^2|<\delta_0=0.502 \\
0 \,\,\,\,\,\,\text{ if }|\alpha^2+\beta^2|>0.6.
\end{cases}
\end{equation}
With this choice, we conclude that $\Psi_{c}|_{W}<0$. The conclusion remain valid even if we choose a constant $c$ slightly smaller than $4\sqrt{2}$.

Now let $\mathcal{D}_n$ be the approximating sets for $D^2\times D^2$ constructed in Proposition \ref{4.0}. Note that 
\begin{equation}\label{inclusions}
\frac{\mathcal{D}_n}{\sqrt{1+\frac 1n}} \subseteq D^2\times D^2 \subseteq  \mathcal{D}_n.
\end{equation}
The first inclusion in \eqref{inclusions} implies 
\begin{equation}\label{Ger}
\frac{r_n}{1+\frac{1}{n}}\geq r,
\end{equation}
where $r_n$ is the gauge function of the set $\mathcal{D}_n$. For each $n$, let $\Psi_c^n$ be defined as

$$\Psi^n_c (\zeta):= \mathcal{A}(\zeta)-c\int^1_0 \frac{r_n(\zeta(t))}{1+\frac{1}{n}}dt, \,\quad \zeta\in E.$$
By \eqref{Ger} we have $\Psi^n_c\leq \Psi_c\leq 0$. Proposition \ref{ckfinite} and Proposition \ref{4.0} then imply
$$\lim_{n\to +\infty} \frac{c_2(\mathcal{D}_n)}{1+\frac{1}{n}}=3\sqrt{3}.$$ 
This is because $\Sigma(\mathcal{D}_n \cap [0,4\sqrt{2}-\varepsilon] =\{\alpha_1^n,\alpha_2^n\}$ for $\varepsilon>0$ and $n$ large, where $\alpha_1^n$ and $\alpha_2^n$ are two sequences
converging to $4$ and $3\sqrt{3}$ respectively (Proposition \ref{4.0}). Together with the fact that $\frac{c_2(\mathcal{D}_n)}{1+\frac 1n}<4\sqrt{2}$  we have that either $c_2(\mathcal{D}_n)=\alpha_1^n$ or $c_2(\mathcal{D}_n)=\alpha_2^n$. 
It cannot be $c_2(\mathcal{D}_n)=\alpha_1^n=c_1(\mathcal{D}_n)$, otherwise the set of characteristics of $\mathcal{D}_n$ of action $\alpha^n_1$ 
would have index $2$, and this is not the case.
Hence $c_2(\mathcal{D}_n)=\alpha^n_2 \to 3\sqrt{3}$, and this gives the conclusion.
\end{proof}

\begin{remark}\label{rmkRamos}
One can deduce that $c_2(D^2\times D^2)=3\sqrt{3}$ from the work of Ramos \cite{R17}. Let $E(a,b)$ denote the ellipsoid 
\begin{equation}\label{ellipsoid}
E(a,b)=\bigg{\{} (z_1,z_2)\in\C^2\,\Big{|} \pi\Big(\frac{|z_1|^2}{a}+\frac{|z_2|^2}{b}\Big)\leq 1 \bigg{\}}.
\end{equation} 
It follows from \cite[Corollary 9]{R17} that $E(4-\varepsilon,4+\delta)$ embeds symplectically into the bidisc $D^2\times D^2$ for some $\varepsilon,\delta>0$. Hence \[4+\delta\leq c_2(D^2\times D^2)\leq 3\sqrt{3}.\] Since $3\sqrt{3}$ is the only number in the spectrum $\Sigma(D^2\times D^2)$ with this property, we conclude that $c_2(D^2\times D^2)=3\sqrt{3}$.
\end{remark}

\bt \label{cap3}
For the unit real bidisc $D^2\times D^2$ we have $c_3(D^2\times D^2) =8$.
\et 
 \begin{proof}
Let $W$ be the subspace of $E$ defined by
$$W := E^-\oplus E^0\oplus\langle (e^{2\pi i t},0),(0,e^{2\pi i t}), (e^{4\pi i t},0)\rangle.
$$

By \cite[Proposition 1]{EH90} the pseudoindex of $W$ is equal to 3. We now prove that for some constant $c$ we have $\Psi_c|_W \le 0$, where $\Psi_c$ is the functional defined in \eqref{Psi}. 
For an element $(\zeta_1,\zeta_2)=(\alpha e^{2\pi i t}+\gamma e^{4\pi i t},\beta e^{2\pi i t}) +w^-+w^0\in W$, with $\alpha,\beta,\gamma\in \C$, $ w^-\in E^-, w^0\in E^0$, we have
\begin{equation}\label{lastint}
\begin{split}
&\Psi_c((\alpha e^{2\pi i t}+\gamma e^{4\pi i t},\beta e^{2\pi i t}) +w^-+w^0)=\mathcal{A}(\zeta) -c\int^1_0 r(\zeta(t))dt \\
&=- \frac c2 ( |\alpha|^2 +|\beta|^2 +|\gamma|^2) -\| w^-\|_E -\frac c2 \|w^0 +w^-\|^2_{L^2}-\frac c2 \int^1_0 \big{|}\Re (\zeta_1^2(t) +\zeta^2_2(t))\big{|} dt \\&\,\,\,\,\,\,\,\,+\pi( |\alpha|^2 +|\beta|^2 +2|\gamma|^2).
\end{split}
\end{equation}
To give an estimate of the last integral in \eqref{lastint}, we compute the coefficient $I_8$ of $e^{8\pi i t}$ in the Fourier expansion of $\Re\big{(} \zeta_1^2(t) +\zeta_2^2(t)\big{)}$:
$$ I_8= \gamma ^2+\overline{2(\alpha,\beta)\cdot w^-_5 +2(\gamma,0)\cdot w^-_6 +2 w^0\cdot
w^-_4 +2w_1^- \cdot w^-_3 +w^-_2\cdot w^-_2}.$$
We want to find a constant $c$ such that
\begin{equation}\label{c3}
\pi( |\alpha|^2 +|\beta|^2 +2|\gamma|^2)- \frac c2 ( |\alpha|^2 +|\beta|^2 +|\gamma|^2) -\| w^-\|_E -\frac c2 \|w^0 +w^-\|^2_{L^2}- \frac c4 |I_8| < 0 .
\end{equation}
Applying the Cauchy-Schwarz inequality, we get
\begin{equation*}
\begin{split}
|I_8|\,\ge &|\gamma|^2 -a_1|w^-_5|^2-\frac 1{a_1} |(\alpha,\beta)|^2 -a_2|w^-_6|^2 -\frac 1{a_2} |\gamma|^2\\&-|w^0|^2-|w^-_4|^2-|w^-_1|^2-|w^-_3|^2-|w^-_2|^2 .
\end{split}
\end{equation*}
Choosing $a_1$ such that $\frac c4 a_1 =\frac c2 +5\pi$ and $a_2$ such that $ \frac c4 a_2= \frac c2 +6\pi$, equation \eqref{c3} becomes
\begin{equation}\label{c3-1}
 (|\alpha|^2+ |\beta|^2)\bigg{(}\pi-\frac c2 - \frac{(\frac c4)^2}{\frac c2 +5\pi}\bigg{)} +|\gamma|^2 \bigg{(}2\pi-\frac c2 -\frac c4 +\frac{(\frac c4)^2}{\frac c2 +6\pi }\bigg{)}+ \dots,
\end{equation}
where dots indicate other negative terms not involving $\alpha,\beta$ or $\gamma$.
The expression in \eqref{c3-1} is negative if
\begin{equation}\label{sist}
\begin{cases}
\pi-\frac c2 - \frac{(\frac c4)^2}{\frac c2 +5\pi} <0 \\
2\pi-\frac c2 -\frac c4 +\frac{(\frac c4)^2}{\frac c2 +6\pi }<0.
\end{cases}
\end{equation}
Solving the system \eqref{sist}, we obtain
$$ c>4\pi \frac{\sqrt{109}-7}{5} .$$
Considering the approximating domains $\mathcal{D}_n$ and reasoning as in the last part of the proof of Thorem \ref{c2bis}, we conclude that 
\begin{equation}\label{d1}
c_3(D^2\times D^2)<4\pi \frac{\sqrt{109}-7}{5}.
\end{equation} Moreover, Proposition \ref{c32} implies
\begin{equation}\label{d2}
c_3(D^2\times D^2)>c_3(\mathbb{B}^2)=2\pi.
\end{equation} Looking at the spectrum $\Sigma(D^2\times D^2)$ computed in Proposition \ref{s1}, we see that \eqref{d1} and \eqref{d2} imply $c_3(D^2\times D^2)=8.$ 
\end{proof}

Note that in the proof of Theorem \ref{cap3} we have used the fact that the capacity $c_3$ of the real bidisc is strictly greater than the corresponding capacity for the unit ball $\mathbb{B}^2$. This inequality is proved below in Proposition \ref{c32} for every capacity $c_k$. The proof relies on the following lemma. 

\begin{lemma}\label{c31} Let $D_1\subset D_2$ be two convex smooth subdomains of $\C^n$ such that for some $k$ we have $c_k(D_1)=c_k(D_2)=:c$. Assume also that $c$ is isolated in $\Sigma(D_1)$ and $\Sigma(D_2)$. Then there exists a closed characteristic $\gamma\subset \partial D_1\cap\partial D_2$ such that $\mathcal{A}(\gamma)=c$.
\end{lemma}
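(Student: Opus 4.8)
The plan is to deform $D_1$ into $D_2$ through a one-parameter family of convex domains all carrying the same $k$-th capacity, and then to pin the carrier of the capacity onto the common boundary by a first-variation argument. Throughout I use that for a smooth convex domain the Ekeland--Hofer value $c_k$ is a critical value realized by an honest closed characteristic on the boundary, exactly as exploited in Proposition \ref{ckfinite} with the gauge Hamiltonians.

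First I would set up the interpolation. Since $D_1\subset D_2$, the gauge functions satisfy $r_1\ge r_2$. For $s\in[0,1]$ set $r_s:=(1-s)r_1+sr_2$, a convex gauge function (after an arbitrarily small smoothing we may take the domain $D_s:=\{r_s<1\}$ to be smooth and strictly convex), so that $D_1\subseteq D_s\subseteq D_2$ with $D_0=D_1$ and $D_{1}=D_2$. By monotonicity of the capacities, $c=c_k(D_1)\le c_k(D_s)\le c_k(D_2)=c$, hence $c_k(D_s)=c$ for every $s$. For each $s$, the value $c=c_k(D_s)$ is then realized by a closed characteristic $\gamma_s\subset\partial D_s$ with $\mathcal{A}(\gamma_s)=c$. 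The hypothesis that $c$ is isolated in $\Sigma(D_1)$ and in $\Sigma(D_2)$ guarantees that these carriers are genuine, non-constant closed characteristics sitting at exactly the level $c$, so that no accumulation of the spectrum can interfere as $s\to 0,1$.

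The heart of the argument is the first variation of $s\mapsto c_k(D_s)$. Since the carrier $\gamma_s$ is a critical point of the functional $\mathcal{A}_{H_s}$ associated with the gauge Hamiltonian $H_s=f(r_s)$, the envelope (Hellmann--Feynman) principle for minimax critical values gives
$$\frac{d}{ds}\,c_k(D_s)=-\int_0^1\partial_s H_s\big(\gamma_s(t)\big)\,dt=\int_0^1 f'\!\big(r_s(\gamma_s(t))\big)\,\big(r_1-r_2\big)\big(\gamma_s(t)\big)\,dt\;\ge\;0,$$
where I used $\partial_s r_s=r_2-r_1$, together with $f'\ge0$ and $r_1-r_2\ge0$ everywhere. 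Because $c_k(D_s)\equiv c$, the left-hand side vanishes; choosing $f$ with $f'>0$ on the level $\{r_s=1\}$ that carries $\gamma_s$, the vanishing of a non-negative integral forces $r_1=r_2$ along $\gamma_s$. Combined with $r_s(\gamma_s)\equiv 1$ this yields $r_1\equiv r_2\equiv 1$ on $\gamma_s$, i.e. $\gamma_s\subset\partial D_1\cap\partial D_2$. Finally, at any point of $\partial D_1\cap\partial D_2$ the two convex boundaries share a supporting hyperplane, so $\nabla r_1$ and $\nabla r_2$ are positively parallel there; hence the characteristic directions $J\nabla r_1$ and $J\nabla r_2$ agree up to a positive scalar along $\gamma_s$, and the two characteristic foliations coincide on $\gamma_s$. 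Thus $\gamma:=\gamma_s$ is, as an unparametrized curve, a closed characteristic of both $\partial D_1$ and $\partial D_2$, it lies on $\partial D_1\cap\partial D_2$, and $\mathcal{A}(\gamma)=c$, which is the claim.

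The hard part will be making the first-variation step fully rigorous: one must justify that $s\mapsto c_k(D_s)$ is Lipschitz (the one-sided difference quotients are what we actually need) and that its derivative is controlled by the carrier integral, and one must check that taking the infimum over $f$ in the definition of $c_k$ does not corrupt the envelope identity. This is exactly where the Palais--Smale property for the gauge Hamiltonians (Remark \ref{nuovoremmark}) and the isolation of $c$ enter: they make the critical set at level $c$ compact, so the minimax value varies Lipschitz-continuously in $s$ with derivative pinched by $\int_0^1 f'(r_s(\gamma_s))(r_1-r_2)(\gamma_s)\,dt$, and the standard deformation argument underlying the envelope formula can be carried out. An equivalent route, which sidesteps differentiability, is to run the comparison directly at the endpoints via $\mathcal{A}_{H_1}\le\mathcal{A}_{H_2}$ and a squeezing/deformation lemma producing a common critical point at level $c$; the same tangency computation then places that orbit on $\partial D_1\cap\partial D_2$.
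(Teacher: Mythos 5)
Your main route hinges entirely on the first-variation (envelope) identity for $s\mapsto c_k(D_s)$, and that identity is precisely the content that would need to be proved; as you state it, it is not correct, and the surrounding reductions have gaps. First, admissible Hamiltonians vanish on a neighborhood of $\overline{D_s}$, so $f'\equiv 0$ on the level $\{r_s=1\}$; the carrier of $c_{H_s,k}$ is a rescaled orbit on a level $\{r_s=s_0\}$ with $s_0>1$, so the nondegeneracy you need ($f'>0$ along the carrier) must be arranged there, and $r_1=r_2$ is first obtained on the rescaled orbit and then transported to $\partial D_s$ by homogeneity. Second, for a strongly indefinite functional whose critical level may carry several orbits, minimax theory does not give a derivative equal to $-\int_0^1\partial_sH_s(\gamma_s(t))\,dt$ for ``the'' carrier $\gamma_s$: at best one obtains one-sided difference quotients squeezed between the minimum and maximum of this integral over the critical set at level $c_{H_s,k}$ (compact only after invoking Palais--Smale and isolation), and establishing even that requires the quantitative deformation lemma. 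Third, $c_k(D_s)$ is an infimum over Hamiltonians; to replace it by $c_{H_s,k}$ for one fixed sharp $f$ you need $c$ to be isolated in $\Sigma(D_s)$, which the hypothesis gives only at $s=0,1$ and not for intermediate $s$, where the spectrum of the interpolated domain could accumulate at $c$. None of these issues is fatal to the strategy, but none is carried out, and together they constitute the actual proof.

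The paper's argument is, in effect, the rigorous version of the one-sided variation at the single endpoint $s=0$, which is all that is needed: with a fixed sharp $f$ one has $H^f_1\ge H^f_2$, hence $\mathcal{A}_{H^f_1}\le\mathcal{A}_{H^f_2}$ and, using the isolation of $c$ in the two spectra, $c^k_{H^f_1}=c^k_{H^f_2}$; the sublevel set $W_\epsilon=\{\zeta\,:\,\mathcal{A}_{H^f_2}(\zeta)\le c^k_{H^f_2}+\epsilon\}$ is an invariant set of pseudoindex at least $k$ on which $\mathcal{A}_{H^f_1}\le c^k_{H^f_2}+\epsilon$, a deformation argument produces critical points $\zeta_n$ of $\mathcal{A}_{H^f_1}$ inside $W_{\epsilon_n}$ converging to a critical orbit $\zeta_0$ at level $c^k_{H^f_1}$, and if $\zeta_0$ left $\partial D_1\cap\partial D_2$ then $r_1>r_2$ on an open arc of $\zeta_0$ would force $\mathcal{A}_{H^f_2}(\zeta_n)>c^k_{H^f_2}+\epsilon$, contradicting $\zeta_n\in W_{\epsilon_n}$. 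This is exactly the ``equivalent route'' you relegate to your final sentence; that is the one to carry out, and it makes the interpolation and the envelope formula unnecessary.
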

\begin{proof} Let $r_1$ and $r_2$ be the gauge functions of $D_1$ and $D_2$ respectively. Let $f$ be an increasing positive function such that
\begin{equation}
f(s)=\begin{cases}
0 \,\,\,\,\,\,\,\text{ if }s\le 1 \\
Cs \,\,\,\text{ if $s$ is large}
\end{cases}
\end{equation}
and $f'(s_0)=c$ only for one $s_0 >1$. We will specify later how $s_0$ and $C$ are chosen. Let $H^f_i:=f\circ r_i$ for $i=1,2$. Note that 
\begin{equation}\label{equa}
c_k(D_1)=\inf_{H_1}\{ c^k_{H_1} \}=\inf_{H_2}\{ c^k_{H_2} \} =c_k(D_2),
\end{equation} where each infimum is taken over all possible admissible Hamiltonians $H_i$ for $D_i$. 
Equation \eqref{equa} together with the fact that $c$ is isolated in the spectra $\Sigma(D_1)$ and $\Sigma(D_2)$ implies that, choosing a function $f$ with $C$ sufficiently large, we obtain
$$ c^k_{H^f_1} =c^k_{H^f_2}.$$
Let $\mathcal{A}_{H^f_i}$ be the corresponding Hamiltonian actions. Since $D_1\subset D_2$, then $r_1\ge r_2$ and $\mathcal{A}_{H^f_1} \le\mathcal{A}_{H^f_2}$.
Let $$W_{\epsilon}=\big{\{} \zeta\in E\, |\, \mathcal{A}_{H^f_2}(\zeta)\le c^k_{H^f_2}+\epsilon \big{\}}.$$ $W_{\epsilon}$ is a closed equivariant set of pseudoindex at least $k$.
We have
$$ c^k_{H^f_1}\le \sup_{\zeta\in W_\epsilon}\{ \mathcal{A}_{H^f_1}(\zeta)\}\le   
\sup_{\zeta\in W_\epsilon}\{ \mathcal{A}_{H^f_2}(\zeta)\}=c^k_{H^f_2}+\epsilon .$$
We now claim that there exist two sequences $\epsilon_n \to 0$ and $\zeta_n\in W_{\epsilon_n}$ such that 
$\nabla_E\mathcal{A}_{H^f_1}(\zeta_n)\to 0$ and $\mathcal{A}_{H^f_1}(\zeta_n)\to c^k_{H^f_1}$.  Indeed, assume that this is not true. Then there exist $\epsilon_0 >0$ and $\delta_0>0$ such that for $\epsilon\le\epsilon_0$ and for all $\zeta\in W_\epsilon$ such that $|\mathcal{A}_{H^f_1}(\zeta)-c^k_{H^f_1}| \le \varepsilon_0$ we have $\| \nabla\mathcal{A}_{H^f_1}(\zeta)\|_E >\delta_0$. Folllowing \cite[Lemma 1]{EH90} and \cite[Proposition 2]{EH89}, 
we denote by $\Phi_t$ be the flow at the time $t$ in $E$ of the vector field $-\nabla\mathcal{A}_{H^f_1}$ with a suitable cut-off. Choosing $\epsilon$ small enough we have, for $t>\frac\epsilon {\delta_0^2}$, that 
$$ \sup_{\zeta \in \Phi_t (W_\epsilon)} \mathcal{A}_{H^f_1}(\zeta) <c^k_{H^f_1},$$
which gives a contradiction.

We have thus proved that there exists a subsequence of $\zeta_n$ converging in $E$ to $\zeta_0$, which is a critical point for $\mathcal{A}_{H^f_1}$ and such that $\mathcal{A}_{H^f_1}(\zeta_0)=c^k_{H^f_1}$. 
If the image of $\zeta_0$ is not in $\partial D_1\cap \partial D_2$, then $r_1(\zeta_0(t))\ge r_2(\zeta_0(t))$ and the inequality is strict in some open interval.  
This implies that $\mathcal{A}_{H^f_2}(\zeta_0)>c^k_{H^f_2}$ and in turn $\mathcal{A}_{H^f_2}(\zeta_n)>c^k_{H^f_2}+\epsilon$ for some $\epsilon>0$ not depending on $n$. 
But this is in contradiction with the definition of $W_{\varepsilon_n}$.
\end{proof}
\bp \label{c32} For the real bidisc $D^2\times D^2$ and the unit ball $\mathbb{B}^2$ in $\C^2$, we have $c_k(D^2\times D^2)>c_k(\mathbb{B}^2)$ for every positive integer $k$. \ep
\begin{proof} Since the unit ball $\mathbb{B}^2$ centered at $0$ is contained in $D^2\times D^2$ we have, for any $k$, that $c_k(D^2\times D^2)\ge c_k(\mathbb{B}^2)$. Choose a smooth domain $W$ containing $\mathbb{B}^2$ and contained in $D^2\times D^2$ with a 
discrete action spectrum and having the same intersection at the bounday with $D^2\times D^2$ and $\mathbb{B}^2$. If $c_k(D^2\times D^2)=c_k(\mathbb{B}^2)$ for some $k$, then Lemma \ref{c31} implies that there exists a characteristic contained in 
\begin{equation*}
\begin{split}
\partial W\cap \mathbb{B}^2&=\partial (D^2\times D^2)\cap \mathbb{B}^2\\&=\{(x_1,x_2)\in\C^2 \,\vert\, x_1^2+x^2_2=1\}\cup\{(iy_1,iy_2)\in\C^2\,\vert\, y^2_1+y^2_2=1 \}.
\end{split}
\end{equation*} Since there are no characteristics in the intersection of the boundaries, 
then we must have $c_k(D^2\times D^2)>c_k(\mathbb{B}^2)$. 
\end{proof}

\begin{remark}\label{R2}
Let $B(a)=E(a,a)$, that is, $B(a)$ is the Euclidean ball of radius $\sqrt{a/\pi}$. It follows from \cite{R17} that the real bidisc $D^2\times D^2$ can be symplectically embedded into the ellipsoid $E(4,3\sqrt{3})$ and that the ball $B(4)$ can be symplectically embedded into $D^2\times D^2$. This implies that $c_3(D^2\times D^2)=8$. Proposition \ref{c32} can also be obtained from \cite{R17}. Indeed, $\mathbb{B}^2=B(\pi)$ has strictly smaller Ekeland-Hofer capacities than $B(4)$, which embeds into $D^2\times D^2$.
\end{remark}

\section{Applications}\label{SectionMain}

We now exploit our computations of the capacities of $D^2\times D^2$ to prove some results of symplectic rigidity.
First recall that the complex bidisc $\Delta^2\subset\C^2$ is defined by 
$$\Delta^2:=\big{\{}(z_1,z_2)\in \C^2 | \  x_1^2+y^2_1 <1, x^2_2+y^2_2<1 \big{\}}.$$ 
 In \cite{ST12} 
Shukov and Tumanov applied techniques from classical complex analysis to prove that there exists no symplectic embedding of $D^2\times D^2$ into $\Delta^2$. Using symplectic capacities we can easily show that no symplectic embedding is possible in the other direction.
\begin{corollary}\label{mainn1} There is no symplectic embedding of $\Delta^2$ into $D^2\times D^2$.
\end{corollary}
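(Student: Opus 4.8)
The plan is to use the monotonicity of the Ekeland-Hofer capacities together with the explicit values established in the Main Theorem. The key idea is that a symplectic embedding $\Delta^2 \hookrightarrow D^2\times D^2$ would force $c_j(\Delta^2) \le c_j(D^2\times D^2)$ for every $j$, so it suffices to exhibit a single index $j$ for which this inequality fails. Since the complex bidisc $\Delta^2$ is the polydisc of radius $1$ in each factor, its Ekeland-Hofer capacities are known from the work of Ekeland and Hofer \cite{EH90}; in particular the closed characteristics of $\Delta^2$ live on the product of the two circles and have action $2\pi$ along each factor.

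First I would recall (or quote from \cite{EH90}) the values of the capacities of the complex bidisc $\Delta^2$. For the standard polydisc, the action spectrum consists of multiples of $2\pi$ coming from winding around the boundary circles, so that the capacities are $c_1(\Delta^2)=2\pi$, $c_2(\Delta^2)=4\pi$, and more generally $c_j(\Delta^2)=2\pi\lceil j/1\rceil$-type values obtained by counting lattice contributions. The precise fact I need is that some $c_j(\Delta^2)$ already strictly exceeds $c_j(D^2\times D^2)$. Comparing with the Main Theorem, where $c_1(D^2\times D^2)=4$, $c_2(D^2\times D^2)=3\sqrt 3$, and $c_3(D^2\times D^2)=8$, the natural candidate is the first capacity: $c_1(\Delta^2)=2\pi > 4 = c_1(D^2\times D^2)$. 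This single comparison already produces the contradiction.

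The argument then runs as follows. Suppose toward a contradiction that there is a symplectic embedding $\Delta^2 \hookrightarrow D^2\times D^2$. By the monotonicity axiom for capacities we would have $c_1(\Delta^2)\le c_1(D^2\times D^2)$. But $c_1(\Delta^2)=2\pi$ while $c_1(D^2\times D^2)=4$ by Theorem \ref{c1}, and $2\pi > 4$, a contradiction. Hence no such embedding exists.

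The only genuine obstacle is making sure the value $c_1(\Delta^2)=2\pi$ is correctly justified for the complex bidisc; this is standard and follows either by direct computation of the action spectrum of $\Delta^2$ (whose minimal closed characteristic has action $2\pi$) combined with Theorem \ref{c1smooth}, or by citing \cite{EH90} directly. One should note that $\Delta^2$ is convex with Lipschitz boundary, so the characterization of $c_1$ as the minimum of the action spectrum applies after the same kind of smooth approximation used earlier in the paper, yielding $c_1(\Delta^2)=2\pi$. Everything else is immediate from monotonicity, so no delicate estimates are required.
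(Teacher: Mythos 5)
Your overall strategy (monotonicity plus an explicit capacity comparison) is the same as the paper's, but the numerical input you feed into it is wrong, and with the correct value your chosen index gives no contradiction at all. For the unit complex bidisc the Ekeland--Hofer capacities are $c_k(\Delta^2)=k\pi$ (Ekeland--Hofer \cite{EH90}): the minimal closed characteristic of $\partial\Delta^2$ lies on $\partial D\times\{pt\}$ and its \emph{action} is the enclosed symplectic area $\pi\cdot 1^2=\pi$, not the circumference $2\pi$ of the boundary circle, which is what you appear to have used. Since $c_1(\Delta^2)=\pi\approx 3.14<4=c_1(D^2\times D^2)$, the first capacity is perfectly consistent with the existence of an embedding, so your ``single comparison'' collapses. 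The obstruction only appears at higher index: $c_2(\Delta^2)=2\pi>3\sqrt3=c_2(D^2\times D^2)$, and the paper uses $c_3(\Delta^2)=3\pi>8=c_3(D^2\times D^2)$. So the argument can be repaired by replacing $c_1$ with $c_2$ or $c_3$, but as written the proof fails at its one essential step.

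A secondary point: monotonicity of the Ekeland--Hofer capacities is not literally an axiom applicable to an arbitrary symplectic embedding of open subsets of $\C^2$; the paper handles this by invoking the extension after restriction principle, extending the embedding restricted to $\Delta^2_{1-\epsilon}$ to a compactly supported symplectomorphism of $\C^2$ and then letting $\epsilon\to 0$. You should include this step (or explicitly justify monotonicity for embeddings of star-shaped domains) rather than citing ``the monotonicity axiom'' directly.
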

\begin{proof}
Assume by contradiction that there is such an embedding $\psi :\Delta^2\rightarrow D^2\times D^2$. By the extension after restriction principle, for any $\epsilon>0$ there exists a symplectic map with compact support $\psi_\epsilon :\C^2 \rightarrow \C^2$ such that $\psi_\epsilon |_{\Delta^2_{1-\epsilon}} =\psi |_{\Delta^2_{1-\epsilon}}$. Therefore
$$3\pi (1-\epsilon)^2 =c_3(\Delta^2_{1-\epsilon})=c_3(\psi_{\epsilon}(\Delta^2_{1-\epsilon}))\le c_3 (D^2\times D^2)=8,$$
which gives a contradiction.
\end{proof}
Note that the proof of Corollary \ref{mainn1} implies that the complex bidisc cannot be embedded even in a slightly larger real bidisc.

\begin{remark} Corollary \ref{mainn1} can also be obtained without using the Ekeland-Hofer capacities. By \cite{R17}, the bidisc $D^2\times D^2$ is a concave toric domain. One can then apply \cite[Theorem 1.18]{GH18} to conclude that the cube capacity $c_{\Box}$ of $D^2\times D^2$ is equal to $2$. By the very definition of the cube capacity \cite[Definition 1.17]{GH18}, Corollary \ref{mainn1} follows. 
\end{remark}

To prove the next rigidity result we need to recall a product property of the Ekeland-Hofer capacities: if $A\subset \C^n $ and $B\subset \C^m$, then
$$ c_k(A\times B)=\min_{i+j=k}\{c_i(A)+c_j(B)\}.$$
Here we use the convention that the zero-th capacity is equal to $0$, that is, $c_0(A)=c_0(B)=0$.
We denote by $\Delta_R$ the standard complex disc of radius $R$.
\begin{corollary}\label{3.1}
The product $D^2\times D^2\times \Delta_R$ is not symplectomorphic to $\Delta^2\times \Delta_R$ for $R>\sqrt{\frac{3\sqrt{3}}{2\pi}}$.
\end{corollary}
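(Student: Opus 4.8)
The plan is to distinguish the two six-dimensional products by computing a single Ekeland--Hofer capacity on each and showing that the values disagree for every admissible $R$. Since a symplectomorphism preserves all of the $c_k$, it suffices to produce one index $k$ with $c_k(D^2\times D^2\times\Delta_R)\neq c_k(\Delta^2\times\Delta_R)$; I will take $k=2$. First I assemble the inputs. The disc $\Delta_R\subset\C$ is the unit disc rescaled by $R$, so by conformality together with the standard ellipsoid computation $c_j(\Delta_1)=j\pi$ one gets $c_j(\Delta_R)=j\pi R^2$. Writing $a:=\pi R^2$, the hypothesis $R>\sqrt{3\sqrt{3}/(2\pi)}$ becomes simply $a>3\sqrt{3}/2$. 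The complex bidisc $\Delta^2=\Delta_1\times\Delta_1$ is itself a product, so the product property and $c_j(\Delta_1)=j\pi$ give $c_j(\Delta^2)=\min_{p+q=j}\{(p+q)\pi\}=j\pi$; in particular $c_1(\Delta^2)=\pi$ and $c_2(\Delta^2)=2\pi$. For the real bidisc the Main Theorem supplies $c_1(D^2\times D^2)=4$ and $c_2(D^2\times D^2)=3\sqrt{3}$.

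Next I apply the product property to each triple product, obtaining
\begin{align*}
c_2(\Delta^2\times\Delta_R)&=\min\{c_2(\Delta^2),\,c_1(\Delta^2)+c_1(\Delta_R),\,c_2(\Delta_R)\}=\min\{2\pi,\,\pi+a,\,2a\},\\
c_2(D^2\times D^2\times\Delta_R)&=\min\{c_2(D^2\times D^2),\,c_1(D^2\times D^2)+c_1(\Delta_R),\,c_2(\Delta_R)\}=\min\{3\sqrt{3},\,4+a,\,2a\}.
\end{align*}
The remaining argument is purely arithmetic. For $a>3\sqrt{3}/2$ we have $3\sqrt{3}<2a$ and, since then $4+a>6>3\sqrt{3}$, also $3\sqrt{3}<4+a$; hence the second minimum collapses to $c_2(D^2\times D^2\times\Delta_R)=3\sqrt{3}$. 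On the other hand, in this same range each of the three competitors $2\pi$, $\pi+a$, $2a$ is strictly larger than $3\sqrt{3}$ (the binding inequality being $2a>3\sqrt{3}$), so $c_2(\Delta^2\times\Delta_R)>3\sqrt{3}$. Thus the two second capacities differ, and the products are not symplectomorphic.

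There is no deep obstacle here: the computation is bookkeeping built on the product property and the already established values of the capacities. The one point demanding care is the threshold. The number $3\sqrt{3}/2$ is precisely where the term $2c_1(\Delta_R)=2a$ stops being the minimizer in $c_2(D^2\times D^2\times\Delta_R)$, pinning that capacity at $3\sqrt{3}$; for $a$ below the threshold both second capacities would instead equal $2a$ and coincide, so the strict inequality $R>\sqrt{3\sqrt{3}/(2\pi)}$ is genuinely necessary. I will therefore verify carefully that all three competitors for $c_2(\Delta^2\times\Delta_R)$ remain strictly above $3\sqrt{3}$ for every such $a$, which is what secures the separation uniformly in $R$.
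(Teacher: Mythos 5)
Your proof is correct and follows essentially the same route as the paper: compute $c_2$ of both triple products via the product property of the Ekeland--Hofer capacities and observe that the values differ exactly when $R>\sqrt{3\sqrt{3}/(2\pi)}$. The only minor difference is that the paper disposes of the case $R\ge 1$ by citing an earlier rigidity result of Wong and then computes $c_2(\Delta^2\times\Delta_R)=2\pi R^2$ for $R<1$, whereas you treat all $R$ above the threshold uniformly by checking that each competitor in the minimum for $c_2(\Delta^2\times\Delta_R)$ strictly exceeds $3\sqrt{3}=c_2(D^2\times D^2\times\Delta_R)$.
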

\begin{proof}
The case $R\ge 1$ is known \cite[Theorem 4.1]{W18}, hence let $R<1$. We have
$c_2(\Delta^2\times \Delta_R)=2\pi R^2$. On the other hand, $c_2(D^2\times D^2\times \Delta_R)=\min\{ 3\sqrt{3},4+\pi R^2, 2\pi R^2\}$.
The two capacities are different if $R>\sqrt{\frac{3\sqrt{3}}{2\pi}}$.
\end{proof}
\begin{remark}
The bound in Corollary \ref{3.1} can be improved to $\sqrt{2/\pi}$ using Ekeland-Hofer symplectic capacities of higher order. More precisely, one has to show, for each positive integer $n$, that $c_{2n-1}(D^2\times D^2)=4n$. This can be achieved by arguing in a similar way as in Theorem \ref{cap3}, where we computed the value of $c_3(D^2\times D^2)$. Vinicius Gripp Barros Ramos has informed the authors that the bound $\sqrt{2/\pi}$ can also be obtained using the results in \cite{GH18}.
\end{remark}

\section{Acknowledgements}
The authors would like to warmly thank Vinicius Gripp Barros Ramos for insightful comments on the first version of this paper. We also acknowledge helpful suggestions and remarks from the anonymous referee. 

 							%	\bibliography{mybibTVK}

 							\end{document}